\newcommand{\cqfd}{{\nobreak\hfil\penalty50\hskip2em\hbox{}
\nobreak\hfil $\square$\qquad\parfillskip=0pt\finalhyphendemerits=0\par\medskip}}
\newcommand{\R}{\mathbb{R}}
\newcommand{\dt}{\partial_t}
\newcommand{\dx}{\partial_x}
\newcommand{\eps}{ \varepsilon}
\numberwithin{equation}{section}
\newtheorem{theorem}{Theorem}[section]
\newtheorem*{theorem*}{Theorem}
\newtheorem{lemma}{Lemma}[section]
\theoremstyle{definition}
\newtheorem{remark}{Remark}[section]
\author{Wentao Cao}
\address{Institute of Applied Mathematics, AMSS, CAS, Beijing 100190, China.}
\email{cwt@amss.ac.cn}
\author{Feimin Huang}
\address{Institute of Applied Mathematics, AMSS, CAS, Beijing 100190, China.}
\email{fhuang@amt.ac.cn}
\author{Dehua Wang}
\address{Department of Mathematics, University of Pittsburgh,
                           Pittsburgh, PA 15260, USA.}
\email{dwang@math.pitt.edu}
\title[Isometric Immersion of Complete Surfaces]
{Isometric Immersion of Complete Surfaces with Slowly Decaying Negative Gauss Curvature}
\date{\today}
\keywords{Isometric immersion of surfaces,  the Gauss-Codazzi system, second fundamental forms, smooth solutions,  negative Gauss curvature, slowly decay, Riemann invariants. }
\subjclass[2010]{53C42, 53C21, 53C45, 35L45.}
\begin{document}

\begin{abstract}
The isometric immersion of two-dimensional Riemannian manifolds or surfaces in the three-dimensional Euclidean space  is a fundamental problem in differential geometry. When the   Gauss curvature is negative, the isometric immersion problem is considered in this paper through the Gauss-Codazzi system for the second fundamental forms.
It is shown that if the Gauss curvature satisfies an integrability condition,   the surface has a global smooth isometric immersion in the three-dimensional Euclidean space even if the Gauss curvature decays very slowly at infinity.
The new idea of the proof is based on the novel observations on the decay properties of the Riemann invariants of the Gauss-Codazzi system. The weighted Riemann invariants are introduced and a comparison principle is applied with properly chosen control functions.
\end{abstract}

\maketitle

%
%%%%%%%%%%%%%%%%%%%%%%%%%%%%%%%%%%%%%%%%%%%%%%%%%%
\section{Introduction}
%%%%%%%%%%%%%%%%%%%%%%%%%%%%%%%%%%%%%%%%%%%%%%%%%%%%%%%%%%%%%%%%%
%％Hong \cite{H} proved the global existence of smooth isometric immersion of surfaces ％with certain decay  rate of the Gauss curvature. The result of this paper improves the ％decay rate of Hong \cite{H}.
The isometric  embedding or immersion   of Riemannian manifolds is a fundamental problem in differential geometry and has been studied extensively   since the 19th century.  Roughly speaking, given a Riemannian manifold ($M^n,\mathfrak{g}$), the isometric  embedding or immersion   problem is to seek a mapping ${\bf r}$ into $\mathbb{R}^m$ such that
\begin{equation}\label{mapping}
d{\bf r}\cdot d{\bf r}=\mathfrak{g}.
\end{equation}
Since the number of equations in \eqref{mapping} is $\frac{n(n+1)}{2}$, it is believed that the critical
dimension for isometric  embedding or immersion   is $m=s_n=\frac{n(n+1)}{2}$ (called   the Janet dimension). There have been a number  of remarkable achievements in the case $m>s_n$;  see  Nash \cite{nash1954, nash1956}, Gromov \cite{gromov1,gromov2} and the references therein. It is a classical and challenging problem to study the critical dimensional case, that is, $m=s_n$.
In this direction, for the analytic metric $\mathfrak{g}$, the local embedding problem was solved by Janet \cite{Janet} in 1926 and Cartan \cite{Cartan} in 1927. For the smooth metric $\mathfrak{g}$, the local smooth isometric immersion was studied in Bryant-Griffiths-Yang  \cite{bgy},  Goodman-Yang \cite{GY}, Nakamura-Maeda \cite{NM,NM2}, Poole \cite{Poole}, and Chen-Clelland-Slemrod-Wang-Yang \cite{CCSWY1} when $n=3$ and in \cite{bgy,GY} when $n=4$.
For more discussions see \cite{CCSWY1} for $n\ge 3$ and \cite{HH} for $n=2$.

In this paper, we focus on the two-dimensional problem, i.e. $n=2, s_n=3$. In this  extremely interesting case, the problem can be reduced to solve the  Darboux equation/Gauss-Codazzi system (c.f.  \cite{BS, HH, PS}).  It is well known that the  Darboux equation/Gauss-Codazzi system is of mixed type of partial differential equations depending on the sign of the Gauss curvature $K$; that is,    the system is elliptic   if $K>0$, hyperbolic if $K<0$, and is of mixed type if $K$ changes sign. There have been considerable progresses on the local smooth embedding \cite{Han,hhl,hw,Lin0,Lin} and the global smooth
embedding \cite{gl,he,hz,n,w} (for the case $K\ge 0$).  Dong \cite{Dong} also studied a semi-global isometric immersion.  We refer the reader to the book of Han-Hong \cite{HH} for an excellent review and discussions of the smooth isometric embedding or immersion of two-dimensional manifolds (surfaces) into the three-dimensional Euclidean space.
We remark that there are some recent works  on the $C^{1, 1}$ isometric immersions via compensated compactness in \cite{CHW, CHW1, CSW, Christoforou, cs}.

The present paper is concerned with the smooth isometric embedding/immersion of surfaces with negative Gauss curvatures. When the Gauss curvature is negative, i.e., $K<0$, the global embedding problem was first investigated by Hilbert \cite{hi}, where he gave a negative answer for the hyperbolic plane whose Gauss curvature is a negative constant.  Efimov  showed  in \cite{Efimov1} that there is no $C^2$ isometric immersion in $\R^3$ if the Gauss curvature is bounded above by a negative constant, and then showed further in   \cite{Efimov2}   that if the Gauss curvature satisfies $$\left|\nabla\frac{1}{k}\right|\leq q_1, \text{for some constant } q_1>0,$$
where $K=-k^2$,  there is no $C^3$ isometric immersion in $\R^3$.
 It is very  challenging to prove the existence of global smooth isometric immersion of a two-dimensional Riemannian manifold with negative Gauss curvature in $\R^3$.
%In fact, the main efforts on this topic were to seek the non-existence before 1980s.
Yau  \cite{Yau} proposed the following problem: {\it ``Find a sufficient  condition for a complete negative curved surface to be isometrically embedded in $\R^3$"}.
%  and also indicated that some decay condition of the curvature at infinity might be needed.
Hong  first gave a positive answer in his groundbreaking work \cite{H} as follows.

\begin{theorem*}[{\bf Hong's Theorem in \cite{H}, 1993}]
Let $(\mathcal{M}, \mathfrak{g})$ be a smooth complete simply connected 2-dimensional surface with negative Gauss curvature
$K=-k^2(\theta, \rho).$ Assume that $k$ satisfies
\begin{equation}\label{c1}
\partial_\rho\ln(k\rho^{1+\delta})\leq0 \text{ as } \rho\geq R,
\end{equation}
for some positive constants $\delta$ and $R,$ and
$$\partial_\theta^i\ln k~(i=1,2), ~~\rho\partial_\theta\partial_\rho\ln k \text{ bounded.}$$
Then $(\mathcal{M}, \mathfrak{g})$ has a smooth isometric immersion in $\mathbb{R}^3.$
\end{theorem*}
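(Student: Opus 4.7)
\medskip

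\noindent\textbf{Proof proposal.} The plan is to reduce Hong's theorem to the global solvability of the Gauss-Codazzi system in geodesic polar coordinates, diagonalize the resulting quasilinear $2\times 2$ hyperbolic system by Riemann invariants, and then exploit the slow-decay hypothesis \eqref{c1} by constructing weighted Riemann invariants whose norms can be controlled through a comparison argument along characteristics.

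Since $(\mathcal{M},\mathfrak{g})$ is smooth, complete, and simply connected, geodesic polar coordinates $(\rho,\theta)$ are globally defined, and the metric takes the form $\mathfrak{g}=d\rho^{2}+B(\rho,\theta)^{2}\,d\theta^{2}$, where $B$ satisfies $B(0,\theta)=0$, $\partial_\rho B(0,\theta)=1$, and $\partial_\rho^{2}B=k^{2}B$. By Bonnet's fundamental theorem of surface theory, constructing a smooth isometric immersion amounts to producing a smooth symmetric second fundamental form $L\,d\rho^{2}+2M\,d\rho\,d\theta+N\,d\theta^{2}$ satisfying the Gauss relation $LN-M^{2}=-k^{2}B^{2}$ together with the two Codazzi equations. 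Eliminating $N$ via the Gauss relation and rescaling $L,M$ by appropriate powers of $B$, I would rewrite the system as a quasilinear $2\times 2$ hyperbolic system for two unknowns $(u,v)$ whose characteristic speeds are $\pm k$ (genuinely hyperbolic away from the pole).

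Next I would diagonalize the principal part by Riemann invariants $r$ and $s$, so that along the two characteristic fields one obtains ODEs of the form
\[
(\partial_\rho+\lambda\,\partial_\theta)\,r=\mathcal{F}_{1}(r,s,\rho,\theta),\qquad (\partial_\rho-\lambda\,\partial_\theta)\,s=\mathcal{F}_{2}(r,s,\rho,\theta),
\]
in which the source terms are explicit in $\partial_\rho\ln k$, $\partial_\theta\ln k$, and $\rho\,\partial_\theta\partial_\rho\ln k$; the boundedness assumptions in the theorem are tailored exactly to these sources. The heart of the proof is the introduction of weighted invariants $\widetilde r=w(\rho)\,r$ and $\widetilde s=w(\rho)\,s$ with a weight comparable to $\sqrt{k\rho^{1+\delta}}$, so that by \eqref{c1} the ratio $w'/w$ contributes a nonnegative (hence dissipative) term along characteristics for $\rho\ge R$. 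The weighted system then satisfies a differential inequality along each characteristic of the form
\[
\frac{d}{d\rho}\bigl(\widetilde r^{\,2}+\widetilde s^{\,2}\bigr)\le A(\rho)\bigl(\widetilde r^{\,2}+\widetilde s^{\,2}\bigr)+C(\rho),
\]
with $A,C$ integrable on $[R,\infty)$. A comparison principle against an explicit super-solution then yields uniform bounds on $(\widetilde r,\widetilde s)$, and therefore $C^{0}$-bounds on $(r,s)$ that decay at infinity at the rate $1/w$.

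Iterating the same weighting scheme on the differentiated equations produces the higher-order estimates needed for classical solvability. Local existence for the quasilinear hyperbolic system near the pole, combined with these global a priori bounds, extends the second fundamental form smoothly to all of $\mathcal{M}$, and simple connectivity allows the reconstruction of the immersion $\mathbf{r}\colon\mathcal{M}\to\R^{3}$ via Bonnet's theorem. The main obstacle I expect is the calibration of the weight $w$: the slow decay rate $1+\delta$ leaves essentially no margin in view of Efimov's obstruction, so $w'/w$ must absorb the worst source term essentially exactly, and the mixed term $\rho\,\partial_\theta\partial_\rho\ln k$ is likely to require an integration by parts along characteristics to avoid a loss of a derivative in the comparison estimate.
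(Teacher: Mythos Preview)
The statement you are proving is not actually proved in this paper: it is Hong's 1993 theorem, quoted here only as background. The paper's own contribution is the companion result (Theorem~\ref{thm}) under the \emph{opposite} sign assumption $\partial_\rho\ln(k\rho^{1+\delta})\ge 0$, and the paper's Sections~\ref{geodesic}--\ref{geodesicpolar} follow Hong's original strategy with modifications. So what I can compare your sketch against is that shared architecture.

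Your outline diverges from that architecture in two substantive ways, and the first is a genuine gap. You propose to work in geodesic polar coordinates throughout and invoke ``local existence near the pole.'' But at $\rho=0$ one has $B=0$, and the Gauss--Codazzi system (see \eqref{Riemannpolar}) carries coefficients $G_\rho/G$, $GG_\rho$ that are singular there; there is no off-the-shelf local existence for a quasilinear hyperbolic system with such singular coefficients. Hong's actual device, reproduced in Section~\ref{geodesicpolar}, is to cover a neighbourhood $\Omega_1$ of the pole by ordinary geodesic coordinates $(x,t)$ (where the metric is $B^2dx^2+dt^2$ with $B(x,0)=1$, nonsingular), solve there with carefully chosen small data, and then transfer to polar coordinates only on the complement $\Omega_2$, matching across $\partial\Omega_2$ via the explicit change of variables $F_*$. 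Without this two-chart mechanism your argument has no way to start.

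The second divergence is methodological rather than fatal. Your plan is to control $\widetilde r^{\,2}+\widetilde s^{\,2}$ by a scalar Gronwall inequality. The approach in Hong (and in this paper) is sharper: one writes the system for the pair $(r,s)$---or, for the derivatives, for $\bigl((r-s)r_x,(r-s)s_x\bigr)$---in the form \eqref{ghs} and applies a \emph{componentwise} comparison principle (Lemma~\ref{lemmasystem}) that exploits the signs of the off-diagonal entries $a_{12},a_{21}$. This is what allows one to build explicit barrier functions $\phi_i$ and close the bootstrap with essentially no loss; a quadratic energy would smear out exactly the sign structure that makes the argument work near the critical decay rate. Two smaller points: the characteristic speeds are $ks$ and $kr$ (solution-dependent), not $\pm k$; and the correct derivative quantities to propagate are $\tilde r=(r-s)r_x$, $\tilde s=(r-s)s_x$ as in \eqref{deri}, not the bare $r_x,s_x$---this choice is what makes the differentiated system \eqref{derivative} close without losing a factor.
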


\begin{remark} In the above theorem, $(\theta, \rho)$ denotes the geodesic polar coordinates. From the  condition (\ref{c1}), the decay rate of the Gauss curvature $k$ at  infinity is
\begin{equation}\label{d}
k\approx\frac{1}{\rho^{1+\delta}}, \text{ for some } \delta>0.
\end{equation}
 On the other hand, Efimov \cite{Efimov2} showed that there is no $C^3$ isometric immersion if  $k=\frac1\rho.$ Thus Hong's decay rate (\ref{c1})  is almost optimal!
\end{remark}

Although Hong's decay rate is almost optimal, there is still a gap between $\frac1\rho$ and $\frac{1}{\rho^{1+\delta}}$. For instance, the case that \begin{equation}\label{c2}
k=\frac{1}{\rho(\ln{\rho})^2} \; \mbox{ or equivalently } \; K=-\frac{1}{\rho^2(\ln{\rho})^4}\; \mbox{for $\rho$  large,}
\end{equation} is excluded in Hong \cite{H}.
How to relax the decay rate  (\ref{c1}) has become  a longstanding open problem since 1993.
In fact, Hong in \cite{H} raised himself the following question: {\it ``One wonders if the restriction on the rate of the decay of the curvature as fast as the geodesic distance from the base curve of $-(2+\delta)$ order at infinity, can be relaxed. It is still open".}  The purpose of this paper is to relax Hong's decay rate (\ref{c1}) for global smooth isometric immersion.
Precisely, our main result can be stated as follows.

\begin{theorem}[{\bf Main Theorem}] \label{thm}
Let ($\mathcal{M},\mathfrak{g}$) be a smooth complete simply connected 2-dimensional surface with the metric $\mathfrak{g}=G^2(\theta, \rho)d\theta^2+d\rho^2$ in the geodesic polar coordinate $(\theta,\rho)$,   the Gauss curvature of which $K=-k^2(\theta, \rho)$ satisfies the following conditions:
\begin{itemize}
    \item[(A1)] $\sup_\theta\int_{ \rho\geq R} k(\theta, \rho)d\rho<\infty,$ $k=o(\frac{1}{\rho}),$ and
        $\partial_\rho\ln(k\rho^{1+\delta})\geq0,$ when $\rho\geq R,$
        for some  positive constants $R$ and $\delta\in(0, 1/2);$
    \item[(A2)] $\partial^i_\theta\ln k, $ $i=1, 2, $  $\rho\partial_\theta\partial_\rho\ln k$ are bounded,
\end{itemize}
%where $o(\cdot)$ means %high order infinity small.
Then $(\mathcal{M}, \mathfrak{g})$ has a smooth isometric immersion in $\mathbb{R}^3.$
\end{theorem}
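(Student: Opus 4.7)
The plan is to solve the Gauss--Codazzi system for the unknown second fundamental form $(L,M,N)$ with $LN-M^{2}=-k^{2}G^{2}$, and then recover the immersion $\mathbf{r}$ via the fundamental theorem of surface theory, which applies because $(\mathcal{M},\mathfrak{g})$ is smooth and simply connected. Following Hong's framework \cite{H}, in the geodesic polar coordinates $(\theta,\rho)$ the Codazzi equations combined with the Gauss equation diagonalize into a $2\times 2$ strictly hyperbolic semilinear system for a pair of Riemann-type invariants $R$ and $S$ built algebraically from $L,M,N$ and $k$. Along the two families of asymptotic characteristics these invariants satisfy transport equations of the schematic form
\begin{align*}
\partial_{+}R &= \alpha_{1}(\theta,\rho)\,R+\beta_{1}(\theta,\rho)\,S+\gamma_{1}(\theta,\rho),\\
\partial_{-}S &= \alpha_{2}(\theta,\rho)\,R+\beta_{2}(\theta,\rho)\,S+\gamma_{2}(\theta,\rho),
\end{align*}
with coefficients expressible in terms of $\partial_{\theta}\ln k$, $\partial_{\rho}\ln k$ and $\rho\,\partial_{\theta}\partial_{\rho}\ln k$. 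I would prescribe smooth Cauchy data for $(R,S)$ on a base circle $\{\rho=R\}$ consistent with $\mathfrak{g}$, and propagate a global smooth solution to $\rho\to\infty$.

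Under Hong's decay $k\lesssim \rho^{-(1+\delta)}$, all the coefficients above are $L^{1}$ in $\rho$ along characteristics, and Gronwall directly closes the $L^\infty$ bounds. Under the weaker hypothesis (A1), the diagonal transport coefficients only decay like $o(1/\rho)$, and a direct Gronwall produces a logarithmic drift that destroys uniform boundedness. To overcome this, I would introduce \emph{weighted} Riemann invariants $\widetilde{R}=\omega_{1}(\theta,\rho)R$ and $\widetilde{S}=\omega_{2}(\theta,\rho)S$, assembled from the two structural ingredients supplied by (A1): a factor of the form $\exp\bigl(c\int_{R}^{\rho}k(\theta,s)\,ds\bigr)$, which is finite by the $L^{1}$ integrability of $k$, and a suitable power of $k(\theta,\rho)\rho^{1+\delta}$, whose monotonicity in $\rho$ is guaranteed by the hypothesis $\partial_{\rho}\ln(k\rho^{1+\delta})\geq 0$. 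The point of the weights is to force the diagonal terms in the transport equations for $\widetilde{R},\widetilde{S}$ to acquire a definite favorable sign, so that the system becomes amenable to a comparison argument rather than a blind Gronwall estimate.

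The core technical step is then a comparison principle along characteristics: construct scalar control functions $\Phi_{1}(\rho), \Phi_{2}(\rho)$ that majorize $|\widetilde{R}|,|\widetilde{S}|$ on the base circle and that are supersolutions of explicit scalar ODEs obtained by taking the $\theta$-supremum of the weighted system, integrable thanks to the special algebraic form of the weights together with $\int^{\infty}k\,d\rho<\infty$ and $k=o(1/\rho)$. The $\Phi_{i}$ are then shown to remain uniformly bounded as $\rho\to\infty$, which translates into a controlled growth rate for $(R,S)$ and hence for $(L,M,N)$. The boundedness of $\partial_{\theta}^{i}\ln k$ and $\rho\,\partial_{\theta}\partial_{\rho}\ln k$ in (A2) gives $\theta$-uniformity and allows the same weight/comparison mechanism to be iterated on the commutators of the system with $\partial_{\theta}$ and $\rho\partial_{\rho}$, yielding $C^{k}$ bounds by induction. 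The main obstacle I anticipate is the design of $\omega_{i}$ and $\Phi_{i}$: they must simultaneously neutralize the slowly decaying $O(1/\rho)$ source terms, respect the sign dictated by the monotonicity $\partial_{\rho}\ln(k\rho^{1+\delta})\geq 0$, and be mutually compatible across the two characteristic families. Once global smooth $(L,M,N)$ solving the Gauss--Codazzi system with $LN-M^{2}=-k^{2}G^{2}$ are in hand, the fundamental theorem of surface theory on the simply connected $(\mathcal{M},\mathfrak{g})$ produces the desired global smooth isometric immersion $\mathbf{r}:\mathcal{M}\to\mathbb{R}^{3}$, completing the proof.
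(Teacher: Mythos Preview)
Your general framework—weighted Riemann invariants plus a comparison principle with explicit control functions—matches the paper's strategy, but several structural features that make the argument close are missing or misidentified.

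First, the displayed transport system is too schematic. After the rescaling $r=w/k$, $s=z/k$ (which is the paper's weight; not the exponentials or powers of $k\rho^{1+\delta}$ you propose), the equations for $(r,s)$ retain genuine quadratic terms $kr^2,\,krs$ and a cubic term $GG_\rho k^2 r^2 s$ on the right-hand side. These do not fit a linear $\alpha R+\beta S+\gamma$ format, and the cubic term is precisely what makes the integrability condition $\int k\,d\rho<\infty$ necessary.

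Second, and more importantly, the mechanism that defeats the slow decay is not a multiplicative weight but a \emph{decoupling observation}: the combination $r+s$ obeys an equation whose linear coefficient $-(2G_\rho/G+k_\rho/k)$ is, thanks to $\partial_\rho\ln(k\rho^{1+\delta})\ge 0$, bounded below by roughly $(1-\delta)/\rho$, forcing $|r+s|\lesssim \varepsilon k\rho=o(\varepsilon)$; by contrast $r-s$ merely stays bounded. This extra decay of $r+s$ is indispensable, because the equations for the derivative quantities $\tilde r=(r-s)r_\theta$, $\tilde s=(r-s)s_\theta$ contain a source term of size $\frac{1}{\rho}|r+s|$ coming from $\partial_\theta(G_\rho/G+k_\rho/(2k))$; without $|r+s|\lesssim \varepsilon k\rho$ this source is $O(\varepsilon/\rho)$, hence not integrable, and the derivative bounds blow up. Conversely, the equation for $r+s$ itself contains $k\tilde s$, so one needs an a priori bound on $\tilde s$ even to estimate $r+s$. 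The paper resolves this mutual dependence by a bootstrap (assume smallness of $\tilde r,\tilde s$, prove the $r\pm s$ estimates, then close on $\tilde r,\tilde s$ via a system-level comparison lemma requiring $f_s,g_r\le 0$). Your sketch treats the derivative bounds as a routine commutator iteration, but this coupling is the heart of the matter and is where the monotonicity hypothesis in (A1) is actually used.

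Third, prescribing data on $\{\rho=R\}$ and propagating outward leaves the disk $\{\rho<R\}$ unaddressed, and the polar system is singular at $\rho=0$. The paper first solves in geodesic (Cartesian-type) coordinates on a large inner region $\Omega_1$ containing the pole, then transfers to polar coordinates on the exterior $\Omega_2$ via an explicit coordinate change; checking that the transferred boundary data on $\partial\Omega_2$ satisfy the required smallness (so that the exterior comparison argument applies) is itself a substantial computation.
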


\begin{remark}
The notation $o(\frac{1}{\rho})$ in the above theorem means $\rho\  o(\frac{1}{\rho})\to 0$ as $\rho\to\infty$.
\end{remark}
\begin{remark}
From (A1), 	the case (\ref{c2}) is included in Theorem \ref{thm}. Thus we give a positive answer
to the question raised by Hong \cite{H}.
\end{remark}
\begin{remark}
Since the condition $\partial_\rho\ln(k\rho^{1+\delta})<0$ was treated in \cite{H},  we
consider the other side  $\partial_\rho\ln(k\rho^{1+\delta})\geq0$ where the Gauss curvature may decay slowly.
\end{remark}
\begin{remark}
	As suggested by Yau \cite{Yau}, the decay rate of curvature shoud be replaced by an integrability condition. The integrability condition $\sup_\theta\int_{ \rho\geq R} k(\theta, \rho)d\rho<\infty$ of (A1) is derived from an observation of the  special solutions to  the Gauss-Codazzi system, see Section \ref{ode} below. This integrability condition might be optimal in terms of the decay rate of Gauss curvature at infinity.
\end{remark}

As remarked above, the main contribution of this paper is that the decay rate of Hong \cite{H} is relaxed so that a curvature like \eqref{c2} between the gap of $ \frac1\rho$ and $\frac1{\rho^{1+\delta}}$ is included in our existence theorem of global smooth isometric immersion of surfaces in $\mathbb{R}^3$.
To prove the result in Theorem \ref{thm}, new ideas are developed based on our novel observations  and insights into the behavior of the Riemann invariants of the Gauss-Codazzi system.
We now explain the main strategy. First, we consider a special case that the metric only depends on one variable $t$, thus the Gauss-Codazzi system is reduced to an ODE system for
the Riemann invariants $(w,z)$. Then we obtain an explicit formula of solution to the ODE system. From the explicit formula, we have two new important observations:
(a)   when the curvature $k$ decays slowly than $t^{-2}$,
both $w$ and $z$ decays at the same order as $k$, i.e., $w\approx k,z\approx k$ (see Section \ref{ode} below);
(b)    $w+z\approx t^{-2}$ decays faster than $w-z\approx k$, which means that some cancellations happen in $w+z$. We expect that the above two properties still hold for general cases. The two observations are the crucial motivations for our approach of proving Theorem \ref{thm}.  First novel ingredient of our proof is that we introduce two weighted
Riemann invariants $r=\frac{w}{k}, s=\frac{z}{k}$ and derive an equivalent system
(\ref{rs}) for $(r,s)$. Based on the comparison principle for linear hyperbolic system, we can show that the first property   (a) holds under some additional  {\it a priori}   assumption of derivatives  $\tilde{r}=(r-s)r_x, \tilde{s}=(r-s)s_x$.
The   {\it a priori}    assumption on the derivatives  can then be verified by the fact that $r+s$ decays faster than $r-s$ at infinity, which is another novel part of our proof.
Using the approach above we can finally show that there is a global smooth isometric immersion of surfaces with slowly decaying Gauss curvature, which relaxes the decay rate of curvatures in Hong \cite{H}.

The rest of the paper is organized as follows. In Section \ref{ode} we provide some basic formulas related to the Gauss-Codazzi system for surfaces with negative Gauss curvature. Then we give an explicit formula of special solutions to the Gauss-Codazzi system. The explicit formula of the special solutions provides us important observations that motivate our new ideas.  Section \ref{geodesic} is devoted to
a key   theorem for the existence of global smooth isometric immersion of surfaces in the geodesic coordinates. This section contains our main new ideas and show why we can relax the decay rate.
The   main Theorem \ref{thm} in geodesic polar coordinates is proved in  Section \ref{geodesicpolar} following the original idea of Hong \cite{H} with modifications through variable transformations.
\bigskip

%
%%%%%%%%%%%%%%%%%%%%%%%%%%%%%%%%%%%%%%%%%%%%%%%%%%
\section{Special Solution to the Gauss-Codazzi System}\label{ode}
%\section{Preliminaries and Formulation}  \label{formula}
%%%%%%%%%%%%%%%%%%%%%%%%%%%%%%%%%%%%%%%%%%%%%%%%%%%%%%%%%%%%%%%%%
%
The Gauss-Codazzi system for the isometric immersion or embedding of
 surfaces into $\mathbb{R}^3$ is (cf. \cite{HH, H}):
\begin{equation}\label{GC}
\begin{split}
& \partial_{x_2}L-\partial_{x_1}M=\Gamma^1_{12}L+(\Gamma^2_{12}-\Gamma^1_{11})M-\Gamma^2_{11}N,\\
 &\partial_{x_2}M-\partial_{x_1}N=\Gamma^1_{22}L+(\Gamma^2_{22}-\Gamma^1_{21})M-\Gamma^2_{21}N,\\
 &LN-M^2=K|\mathfrak{g}|,
 \end{split}
\end{equation}
where $L, M$ and $N$  denote the coefficients  of the  second fundamental form
 \begin{equation}\label{secondform}
 I\!\!I= Ldx_1^2+2Mdx_1dx_2+Ndx_2^2,
 \end{equation}
 with the given metric $\mathfrak{g}=g_{ij}dx_idx_j, i,j=1,2,$ of the surface;
$\Gamma^i_{jk}, i,j,k=1,2,$ are the Christoffel symbols,  $|\mathfrak{g}|$ is the determinant
of the metric matrix $(g_{ij}),$  and $K$ is the Gauss curvature.

In this paper, we consider the isometric immersion of a  complete simply connected two-dimensional  manifold (or a surface) with the negative Gauss curvature
$$K=-k^2$$ for some smooth function $k>0$.
Then the Gauss-Codazzi system \eqref{GC} is  hyperbolic,  the two eigenvalues are
$$\lambda_+=\frac{-M+k\sqrt{|\mathfrak{g}|}}{L}, \quad \lambda_-=\frac{-M-k\sqrt{|\mathfrak{g}|}}{L},$$
%and the corresponding eigenvectors are
%\begin{equation*}
%r_+=\left[\begin{array}{ll}
%1\\ -\lambda_+
%\end{array}
%\right],\quad
%r_-=\left[\begin{array}{ll}
%1\\ -\lambda_-
%\end{array}
%\right].
%\end{equation*}
and the corresponding Riemann invariants (cf. \cite{Smoller})   are
\begin{equation*}
w=\frac{-M+k\sqrt{|\mathfrak{g}|}}{L},\quad  z=\frac{-M-k\sqrt{|\mathfrak{g}|}}{L}.
\end{equation*}
%the same with eigenvalues.
 For simplicity of notations, let $x=x_1,$ $t=x_2.$
A direct calculation leads to the following system for the Riemann invariants $w$ and $z$ (cf.  \cite{HH}):
\begin{equation}\label{wz}
\begin{split}
w_t+zw_x=&\frac{w-z}{2}(\partial_t+w\partial_x)\ln k-\Gamma^1_{22}+(\Gamma^2_{22}-\Gamma^1_{12})w\\
&-\Gamma^1_{12}z+\Gamma^2_{12}w^2+(\Gamma^2_{12}-\Gamma^1_{11})wz+\Gamma^2_{11}w^2z,\\
z_t+wz_x=&\frac{z-w}{2}(\partial_t+z\partial_x)\ln k-\Gamma^1_{22}+(\Gamma^2_{22}-\Gamma^1_{12})z\\
&-\Gamma^1_{12}w+\Gamma^2_{12}z^2+(\Gamma^2_{12}-\Gamma^1_{11})wz+\Gamma^2_{11}wz^2.
\end{split}
\end{equation}
The system (\ref{wz}) is equivalent to (\ref{GC}) when $w>z.$
%So we shall maintain $w>z$ when we solve \eqref{wz}, which is the main difficulty we ran across.
Note that  (\ref{wz}) is  linearly degenerate. % hyperbolic.
% and is  not strictly hyperbolic in any neighbourhood of $w=0$ and $z=0.$
%Thus we need to  impose suitable conditions to guarantee the Gauss-Codazzi system admits a smooth solution.
We shall show that the system \eqref{wz} admits a global smooth solution under certain conditions.

%
%%%%%%%%%%%%%%%%%%%%%%%%%%%%%%%%%%%%%%%%%%%%%%%%%%
%\section{Special Solution to the Gauss-Codazzi System}\label{ode}
%%%%%%%%%%%%%%%%%%%%%%%%%%%%%%%%%%%%%%%%%%%%%%%%%%%%%%%%%%%%%%%%%
In particular, in this section we explore some special solution to the Gauss-Codazzi system, which provides us interesting observations and motivations on the conditions for the global existence of smooth solutions.
% and expect to find some interesting phenomena through the special solutions.
%we will find some special solutions to \eqref{wz}, which enables us to seek what sufficient conditions should be imposed to guarantee the global existence of smooth solutions.
%
Consider a special case that the metric is of the form $\mathfrak{g}=B^2(t)dx^2+dt^2,$ then the  corresponding Christoffel symbols  are
\begin{equation}\label{christoffel1}
\begin{split}
&\Gamma^1_{12}=\frac{B'}{B}, \quad \Gamma^2_{11}=-BB',\\
&\Gamma^1_{11}=\Gamma^1_{22}=\Gamma^2_{12}=\Gamma^2_{22}=0,
\end{split}
\end{equation}
where $'$ denotes $\frac{d}{dt}.$ When the initial data  of the system \eqref{wz} is given by $w(0)=w_0, z(0)=z_0$ with $w_0$ and $z_0$ both constant,  we  consider the special solution $(w, z)(t)$ to (\ref{wz}).
Plugging   (\ref{christoffel1}) into (\ref{wz}), we obtain the following ODE system:
 \begin{equation}\label{Riemann ODE}
\begin{split}
&\frac{dw}{dt}=-\left(\frac{B'}{B}-\frac{k'}{2k}\right)w-\left(\frac{B'}{B}+\frac{k'}{2k}\right)z-BB'w^2z,\\
&\frac{dz}{dt}=-\left(\frac{B'}{B}-\frac{k'}{2k}\right)z-\left(\frac{B'}{B}+\frac{k'}{2k}\right)w-BB'wz^2.
\end{split}
\end{equation}
We now solve the ODE system  (\ref{Riemann ODE}) that has  cubic  nonlinear source terms.
Note that %$(\ref{Riemann ODE})_1\pm(\ref{Riemann ODE})_2$
adding and subtracting the two equations of \eqref{Riemann ODE} yield
\begin{equation}\label{ode1a}
\begin{split}
&\frac{d(w+z)}{dt}=-\frac{2B'}{B}(w+z)-BB'wz(w+z),\\
&\frac{d(w-z)}{dt}=\frac{k'}{k}(w-z)-BB'wz(w-z).
\end{split}
\end{equation}
Then we get
%\begin{equation*}
%\begin{split}
%&w+z=\frac{w_0+z_0}{B^2}\exp\left[\int_0^t-BB'wzds\right],\\
%&w-z=\frac{k(w_0-z_0)}{k(0)}\exp\left[\int_0^t-BB'wzds\right],
%\end{split}
%\end{equation*}
%and
\begin{equation*}
\begin{split}
&w=\left(\frac{w_0+z_0}{2B^2}+\frac{k(w_0-z_0)}{2k(0)}\right)\exp\left[\int_0^t-BB'wzds\right],\\
&z=\left(\frac{w_0+z_0}{2B^2}-\frac{k(w_0-z_0)}{2k(0)}\right)\exp\left[\int_0^t-BB'wzds\right].
\end{split}
\end{equation*}
It holds that
\begin{equation*}
wz=\left(\frac{(w_0+z_0)^2}{4B^4}-\frac{k^2(w_0-z_0)^2}{4k^2(0)}\right)\exp\left[\int_0^t-2BB'wzds\right].
\end{equation*}
Let $X:=wz$. We have
%taking logarithm of the formula of $wz,$ and differentiation with respect to $t,$ the integral equation can be reduced to the following differential equation with respect to $X.$
\begin{equation*}
\frac{d\ln X}{dt}=\frac{d\ln a(t)}{dt}-2BB'X, \quad a(t)=\frac{(w_0+z_0)^2}{4B^4}-\frac{k^2(w_0-z_0)^2}{4k^2(0)},
\end{equation*}
then
%$$X(t)=\frac{a(t)X(0)}{a(0)+X(0)\int_0^t2a(s)BB'ds}.$$
%Substituting  the formula of $a(t)$, we obtain
\begin{equation*}
X(t)=\frac{\frac{(w_0+z_0)^2}{4B^4}-\frac{k^2(w_0-z_0)^2}{4k^2(0)}}
{1+\frac{1}{4}(w_0+z_0)^2(1-\frac{1}{B^2})-\frac{1}{4}(w_0-z_0)^2B^{'2}},
\end{equation*}
where we have used the Gauss equation $B''=k^2B.$
Thus we obtain a special solution to the Gauss-Codazzi system (\ref{wz}) of the following form:
\begin{equation}\label{w}
w=\frac{k(0)(w_0+z_0)+B^2k(w_0-z_0)}
{\sqrt{4B^4k(0)^2+k^2(0)(w_0+z_0)^2(B^4-B^2)-k^2(0)(w_0-z_0)^2B^4B^{'2}}},
\end{equation}
%and
\begin{equation}\label{z}
z=\frac{k(0)(w_0+z_0)-B^2k(w_0-z_0)}
{\sqrt{4B^4k(0)^2+k^2(0)(w_0+z_0)^2(B^4-B^2)-k^2(0)(w_0-z_0)^2B^4B^{'2}}}.
\end{equation}

When $k$ decays at a slower rate than $t^{-2}$  for large $t$, we have the following two important observations:
\begin{itemize}
\item[(1)] both $w$ and $z$ decays at the same order as $k$, i.e., $w\approx k,z\approx k$;
\item[(2)] $w+z\approx t^{-2}$ decays faster than $w-z\approx k$.
\end{itemize}
On the other hand, from   (\ref{w}) and (\ref{z}),
the solution may blow up in a finite time if $B'$ is not bounded. Note that  $B'$ has the following estimate as in \cite{H}:
$$\int_0^tk^2dt\leq B'(t)\leq\int_0^tk^2ds\exp\left[\int_0^tsk^2ds\right].$$
Therefore the ODE system (\ref{Riemann ODE}) has a global solution only if
\begin{equation}\label{cond1}
\int_0^\infty k^2(t)tdt<+\infty.
\end{equation}
It seems that Hong's decay rate (\ref{d}) could be relaxed by (\ref{cond1}).

However, the metric generally  depends not only on $t$ but also on $x.$
In general,  the Christoffel symbols  are
\begin{equation}\label{christoffel2}
\begin{split}
&\Gamma^1_{11}=\frac{B_x}{B}, \quad \Gamma^2_{11}=-BB_t,\quad \Gamma^1_{22}=0,\\
&\Gamma^1_{12}=\frac{B_t}{B}, \quad \Gamma^2_{12}=\Gamma^2_{22}=0,
\end{split}
\end{equation}
then  the Gauss-Codazzi system (\ref{wz}) contains some quadratic terms $w^2, wz, z^2$. Note that
$\partial_x \ln B=\frac{B_x}{B}$ and $\partial_x \ln k=\frac{k_x}{k}$ are uniformly bounded (see Lemma \ref{lemmaB} below), all the coefficients of quadratic terms in the Gauss-Codazzi system \eqref{wz} are uniformly bounded.
We consider the following toy model of  (\ref{wz}), i.e. the ODE system (\ref{Riemann ODE}) with additional quadratic terms:
\begin{equation}\label{ode1}
	\begin{split}
		\frac{dw}{dt}=&-\left(\frac{B_t}{B}-\frac{k_t}{2k}\right)w-\left(\frac{B_t}{B}
		+\frac{k_t}{2k}\right)z-BB_tw^2z+w^2,\\
		\frac{dz}{dt}=&-\left(\frac{B_t}{B}-\frac{k_t}{2k}\right)z-\left(\frac{B_t}{B}
		+\frac{k_t}{2k}\right)w-BB_twz^2-z^2,
	\end{split}
\end{equation}
%So we instead use
%\begin{equation}
%\sup_x\int^\infty k(x,t)dt<\infty
%\end{equation}
%to replace \eqref{cond1} for the global smooth solution for the Gauss-Codazzi system %\eqref{wz}.
%If we consider a special solution to \eqref{wz} by taking $w=-z,$ then the system \eqref{wz}
%becomes
%\begin{equation}\label{riccati}
%w_t-ww_x=\frac{k_t}{k}w+\left(\frac{B_x}{B}+\frac{k_x}{k}\right)w^2+BB_tw^3,
%\end{equation}
%which can be regarded as the Burgers equation with nonlinear source terms.
%Let $$\partial_\gamma=\dt-w\dx,\quad r=\frac{w}{k}, $$ then we have
%\begin{equation}\label{ri}
%\partial_\gamma r=\left(\frac{B_x}{B}+\frac{2k_x}{k}\right)kr^2+BB_tk^2r^3,
%\end{equation}
%whose right hand side is of Riccati type. To seek a smooth solution to (\ref{ri}), it is necessary to require that the coefficients of $r^2$ and $r^3$ are integrable along the characteristic curve and the initial data is small.
which can be reduced to a scalar equation
\begin{equation}\label{ode2}
\frac{dw}{dt}=\frac{k_t}{k}w-BB_tw^2z+w^2,
\end{equation}
for the special case $w=-z$.
It is not difficult to show that the equation \eqref{ode2} has a global solution only if $\int^\infty k(t)dt<\infty.$
So we instead use
\begin{equation}\label{cond2}
\sup_x\int_0^\infty k(t)dt<+\infty
\end{equation}
to relax Hong's decay rate (\ref{d}).
Such decay rate  in (\ref{cond2})  might be optimal for admitting a global immersion in $\mathbb{R}^3$, based on the above observation of special solution to the Gauss-Codazzi system (\ref{GC}).

\bigskip

%Now we turn to prove Theorem \ref{thm} by seeking the global existence of smooth solutions to \eqref{GC} under the corresponding given assumptions. But we have difficulties stated in \cite{H}. Therefore, we first find the global existence of smooth solutions to \eqref{GC} in Section \ref{geodesic} in the geodesic coordinates system, and then apply transformation of coordinates to gain our proof of our main result in Section \ref{geodesicpolar}.
%
%%%%%%%%%%%%%%%%%%%%%%%%%%%%%%%%%%%%%%%%%%%%%%%%%%
\section{Isometric Immersion in Geodesic Coordinates}\label{geodesic}
%Auxiliary theorem and its proof}\label{geodesic}
%%%%%%%%%%%%%%%%%%%%%%%%%%%%%%%%%%%%%%%%%%%%%%%%%%%%%%%%%%%%%%%%%
%
In this section, we prove the isometric immersion in geodesic coordinates under the similar conditions of Theorem \ref{thm} based on the two important observations in the previous Section \ref{ode}.   This section contains our main new ideas and contributions and show   why we can relax the decay rate in  Hong \cite{H} of Gauss curvature of surfaces for the global existence of smooth isometric immersion.

From Theorem B in \cite{H}, the geodesic coordinate system covers the whole complete simply connected two-dimensional surface with a negative Gauss curvature.
The metric of the two-dimensional surface  in  the geodesic coordinates is  of the form
$$\mathfrak{g}=B^2(x, t)dx^2+dt^2,$$  where $B(x, t)>0$ satisfies
\begin{eqnarray}\label{gauss}
\left\{
\begin{array}{lllll}
B_{tt}=k^2B,\\
B(x,0)=1, \quad B_t(x,0)=0.
\end{array}
\right.
\end{eqnarray}

We have the following theorem on the global isometric immersion in the geodesic coordinates.
\begin{theorem} \label{thm1}
Let $(\mathcal{M},\mathfrak{g})$ be a complete simply connected smooth two-dimensional surface with the metric $\mathfrak{g}=B^2(x,t)dx^2+dt^2$
and the Gauss curvature $K=-k^2(x,t)$ satisfying
\begin{itemize}
    \item[(H1)]  $\sup_x\int_{ |t|\geq T} k(x,t)dt<\infty,$ $k=o\left(\frac{1}{|t|}\right),$
                               $t\dt\ln k(|t|^{1+\delta})\geq0$  when $|t|\geq T$
                               for some  positive constant $T$ and $\delta\in(0, 1/2);$
    \item[(H2)]  $\partial^i_x\ln k, $ $i=1, 2, $  $t\dx\dt\ln k$ are bounded;
    % $\dt\ln k$ is locally bounded;
    \item[(H3)]  $\inf_x\int_0^\infty k^2(x,t)dt$ and $\inf_x\int_{-\infty}^0k^2(x,t)dt$ are positive.
\end{itemize}
Then $(\mathcal{M},\mathfrak{g})$ admits a smooth isometric immersion in $\mathbb{R}^3.$
\end{theorem}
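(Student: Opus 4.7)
The plan is to reduce Theorem \ref{thm1} to a global smooth Cauchy problem for the Riemann invariant system \eqref{wz}, and then run a weighted bootstrap argument built around the two observations of Section \ref{ode}. Concretely, the simple connectedness of $\mathcal{M}$ together with the Gauss equation $B_{tt}=k^2 B$ lets me pose initial data $(w,z)|_{t=0}=(w_0(x),z_0(x))$ that is compatible with \eqref{GC} and bounded in the sense suggested by observation (a). The system \eqref{wz} is linearly degenerate and strictly hyperbolic in the region $w>z$, so the standard local theory gives a smooth solution on some strip $\mathbb{R}\times[0,T)$, and a continuation criterion reduces everything to uniform a priori sup bounds on $(w,z)$ and on their first spatial derivatives. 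The symmetry of conditions (H1)--(H3) under $t\mapsto -t$ means it is enough to treat $t\geq 0$.

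The key new device is to pass to the weighted Riemann invariants
\[
r=\frac{w}{k},\qquad s=\frac{z}{k}.
\]
Substituting into \eqref{wz} and using the Christoffel symbols \eqref{christoffel2} together with the boundedness of $\partial_x\ln B$ and $\partial_x\ln k$ coming from (H2) gives an equivalent first-order system for $(r,s)$ whose characteristic speeds are $ks$ and $kr$ and whose source no longer contains the singular factor $\partial_t\ln k$ on the right-hand side. I would then run a continuity argument on $[0,T]$: assume the a priori bounds
\[
\|r\|_\infty+\|s\|_\infty\leq C_1,\qquad \|(r-s)r_x\|_\infty+\|(r-s)s_x\|_\infty\leq C_2,
\]
and prove the same bounds with constants strictly smaller than $C_1,C_2$. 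Under these assumptions the $(r,s)$ system becomes linear hyperbolic with uniformly bounded coefficients, and a comparison principle along the two characteristic families---with control functions chosen in the spirit of the explicit ODE solutions \eqref{w}--\eqref{z}---delivers the sup bound on $r$ and $s$, so that $w\approx k$, $z\approx k$ as predicted by observation (a). Here the integrability $\sup_x\int k\,dt<\infty$ and the monotonicity hypothesis from (H1) are used decisively to dominate the cubic and quadratic nonlinearities.

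Next I would extract the improved decay of $r+s$. Adding the two equations of the $(r,s)$ system and using the Gauss equation exactly as in the derivation of \eqref{ode1a} produces a transport equation for $r+s$ whose source decays like $1/(t^2k)$, one order faster than the expected decay of $r-s$; a second comparison against a barrier of the form $C/(t^2k)$ then gives
\[
\|r+s\|_\infty\leq \frac{C}{t^2 k}\qquad\text{uniformly in } x,
\]
which is the analytic content of observation (b). This is the step where the hypothesis $k=o(1/|t|)$ is essential, since it ensures that the barrier is itself of strictly smaller order than $|r-s|$ for large $|t|$.

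The main obstacle, and the step I expect to consume most of the technical work, is closing the bootstrap on the derivative quantities $\tilde r=(r-s)r_x$ and $\tilde s=(r-s)s_x$. Differentiating the $(r,s)$ system in $x$ and multiplying through by $r-s$ gives transport equations for $\tilde r,\tilde s$ whose apparently dangerous source terms are quadratic in $r_x,s_x$. The trick will be to use the algebraic identities generated by the constraint $LN-M^2=K|\mathfrak g|$ together with \eqref{christoffel2} to rearrange these bad terms into products carrying the factor $r+s$ rather than $r-s$, so that the faster decay $|r+s|\lesssim 1/(t^2k)$ just established absorbs them. Gronwall along characteristics then improves the bootstrap constants. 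Once the bootstrap closes for every $T$, a standard continuation argument gives the global smooth solution $(w,z)$, whence $(L,M,N)$ is recovered from the definitions of the Riemann invariants and the fundamental theorem of surface theory produces the desired smooth isometric immersion in $\mathbb{R}^3$.
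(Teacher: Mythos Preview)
Your overall architecture matches the paper's: pass to $r=w/k$, $s=z/k$, bootstrap on $(r,s,\tilde r,\tilde s)$ with $\tilde r=(r-s)r_x$, $\tilde s=(r-s)s_x$, and use the faster decay of $r+s$ to close the derivative loop. But the mechanism you describe for the derivative step is wrong, and a naive Gronwall will not close it. When you differentiate and multiply by $r-s$, the potentially quadratic term $k s_x r_x$ coming from $\partial_\beta r_x$ cancels exactly against the $k\tilde s\,r_x$ produced by $\partial_\beta(r-s)$; what remains is a \emph{linear} system
\[
\partial_\beta\tilde r=(Q+f_r-k_xs)\tilde r+f_s\tilde s+(r-s)\delta_x f,\qquad
\partial_\alpha\tilde s=g_r\tilde r+(Q+g_s-k_xr)\tilde s+(r-s)\delta_x g,
\]
so there is nothing to rearrange via the Gauss constraint. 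The real obstruction is the cross coupling: $f_s,g_r=-\tfrac{B_t}{B}-\tfrac{k_t}{2k}+O(\varepsilon k)\sim -\tfrac{1}{t}$, which is \emph{not} integrable on $[t_0,\infty)$, so Gronwall along characteristics only gives $|\tilde r|+|\tilde s|\lesssim t^{C}$ and cannot close the bootstrap. The paper resolves this with a comparison principle for the $2\times2$ system (Lemma~\ref{lemmasystem}) that requires the \emph{sign} $f_s\le0$, $g_r\le0$; this is precisely where the monotonicity hypothesis $\partial_t\ln(k|t|^{1+\delta})\ge0$ of (H1), combined with $\tfrac{B_t}{B}=\tfrac1t+o(\tfrac1t)$ from Lemma~\ref{lemmaB}, is used, yielding $-\tfrac{k_t}{2k}-\tfrac{B_t}{B}\le -\tfrac{1-\delta}{2t}+o(\tfrac1t)<0$.

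A few further corrections. The $(r,s)$ system still carries $\partial_t\ln k$: the gain of the weighting is only that this factor now multiplies $r+s$. The source in the $r+s$ equation is $O(\varepsilon k)$, not $O(1/(t^2k))$; the correct control function solves $(kt^2\phi_1)'=\varepsilon k^2t^2$, and the monotonicity of $kt^{1+\delta}$ with $\delta<\tfrac12$ gives the working bound $|r+s|\le C\varepsilon kt$, which is what makes the dangerous piece $\big(\tfrac{B_t}{B}+\tfrac{k_t}{2k}\big)_x(r+s)$ of $\delta_xf$ of order $\varepsilon k$ and hence integrable. You also need a separate \emph{lower} bound $r-s\ge c\varepsilon$ (Lemma~\ref{lemmalow}) both to keep the system strictly hyperbolic and to convert the $\tilde r,\tilde s$ bounds back into bounds on $r_x,s_x$; and since all the comparison arguments require $t\ge t_0$ large, one must first propagate very small constant data up to $t_0$ by the crude estimate of Lemma~\ref{lemmageneral} before the bootstrap begins.
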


\begin{remark}
The condition (H1) is motivated by the observations in Section \ref{ode}. Comparing with $(H_1)$ of Theorem C in \cite{H}:
\begin{itemize}
    \item[$(H_1)$]  $k>0$ and $t\dt\ln k(|t|^{1+\delta})\leq0,$  when $|t|\geq T,$
                  for some large enough positive constant $T$ and $\delta\in(0, 1),$
\end{itemize}
the condition (H1) in Theorem \ref{thm1} allows slower decay rate for the Gauss curvature.
 For instance, let
\begin{equation}\label{ex}
k=\frac{1}{(t+2)(\ln(t+2))^2}, \quad t>>1,
\end{equation} it is straightforward to check that (\ref{ex}) is included in (H1) of Theorem \ref{thm1}, while excluded in
$(H_1)$ of Theorem C in \cite{H}. Thus Theorem \ref{thm1} gives a positive answer to the question raised in \cite{H}:  ``{\it One wonders if the restriction on the rate of the decay of the curvature as fast as the geodesic distance from the base curve of $-(2+\delta)$ order at infinity, can be relaxed.} "
\end{remark}

We first derive some properties of $B(x, t)$ under the conditions of Theorem \ref{thm1}.
\begin{lemma}\label{lemmaB}
Under the conditions of Theorem \ref{thm1}, one has the following estimate:
\begin{equation*}
\begin{split}
&\frac{B_t}{B}=\frac{1}{t}+o\left(\frac{1}{|t|}\right) \text{ uniformly in $x$ for $|t|$ sufficiently large, and}\\
& \dx^i\ln B, i=1, 2,~B\dx\dt\ln B\text{ are bounded for $|t|$ sufficiently large.}
\end{split}
\end{equation*}
\end{lemma}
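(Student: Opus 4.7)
The plan is to analyze the linear ODE $B_{tt}=k^2B$ with $B(x,0)=1,\ B_t(x,0)=0$ for each fixed $x$, and then to differentiate in $x$. First I would obtain a two-sided linear bound $c(1+|t|)\leq B(x,t)\leq C(1+|t|)$ uniformly in $x$. The upper bound comes from Gronwall's inequality applied to
\begin{equation*}
\frac{B(t)}{1+t}\leq 1+\int_0^t(1+s)k^2\cdot\frac{B(s)}{1+s}\,ds,
\end{equation*}
derived from $(t-s)(1+s)/(1+t)\leq 1+s$; the exponent $\int_0^\infty(1+s)k^2\,ds$ is uniformly finite because $sk$ is uniformly bounded (from $k=o(1/|s|)$) so that $k^2+sk^2\leq Ck$ for $s$ large, and $\int k\,ds<\infty$ by (H1). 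The lower bound uses $B\geq 1$, which gives $B_t\geq\int_0^t k^2\,ds$, combined with (H3) and the uniform tail estimate $\int_{T_0}^\infty k^2\,ds\leq(1+T_0)^{-1}\int(1+s)k^2\,ds$ to fix a threshold $T_0$ independent of $x$.

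Second, with these bounds, $k^2B$ is uniformly integrable in $s$, so that $B_t(x,t)=\int_0^t k^2B\,ds$ converges to a positive limit $a(x)\geq c_0$ as $t\to\infty$; in particular $B(x,t)=a(x)t+O(1)$. To sharpen this to $B_t/B=\tfrac{1}{t}+o(\tfrac{1}{|t|})$, I write
\begin{equation*}
\frac{tB_t}{B}-1=\frac{tB_t-B}{B},\qquad (tB_t-B)_t=tk^2B,
\end{equation*}
and apply the $\infty/\infty$ form of L'Hospital's rule:
\begin{equation*}
\lim_{t\to\infty}\frac{tB_t-B}{B}=\lim_{t\to\infty}\frac{tk^2B}{B_t}=\frac{1}{a}\lim_{t\to\infty}(tk)^2\cdot\frac{B}{t}=0,
\end{equation*}
since $(tk)^2\to 0$ uniformly in $x$ (from $k=o(1/|t|)$) and $B/t\to a$. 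The case $t\to-\infty$ is symmetric.

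For the $x$-derivative estimates, set $\beta:=\dx\ln B$ and $u:=B_t/B$. Differentiating the Riccati equation $u_t+u^2=k^2$ in $x$ yields
\begin{equation*}
\beta_{tt}+2u\beta_t=2kk_x,\qquad\beta(x,0)=\beta_t(x,0)=0,
\end{equation*}
the initial data vanishing because $B(x,0)=1$ and $B_t(x,0)=0$ are constant in $x$. The integrating factor $e^{2\int u\,ds}=B^2$ converts this into $B^2\beta_t=\int_0^t 2kk_xB^2\,ds$, and then Fubini yields
\begin{equation*}
\beta(x,t)=\int_0^t 2kk_xB^2(\tau)\Bigl(\int_\tau^t B^{-2}(s)\,ds\Bigr)d\tau.
\end{equation*}
For $\tau$ large the inner integral is $\leq C/\tau$ (by the linear lower bound), while $|kk_x|\leq Ck^2$ from (H2) together with $B^2(\tau)\leq C\tau^2$ forces the integrand to be $\leq C'\tau k^2$; since $\int\tau k^2\,d\tau\leq(\sup\tau k)\int k\,d\tau$ is uniformly finite, $\beta$ is uniformly bounded. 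The quantity $B\dx\dt\ln B=B\beta_t=B^{-1}\int_0^t 2kk_xB^2\,ds$ is bounded (and in fact tends to $0$) via the Cesaro observation $\int_0^t(kB)^2\,ds=o(t)$, using $kB\to 0$ uniformly and $B\sim at$. The second-order bound for $\dx^2\ln B$ is obtained by differentiating the equation for $\beta$ once more in $x$; the additional source term $2\beta_t^2$ is controlled by the same Fubini scheme combined with the weighted integrability $\int\tau\beta_t^2\,d\tau<\infty$, proved by integration by parts from the explicit formula $B^2|\beta_t|\leq C\int_0^t(kB)^2\,ds$.

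The hardest part will be the uniform asymptotic $B_t/B=1/t+o(1/|t|)$ in the second step: because $k$ may decay as slowly as $1/(t\log^2 t)$, no polynomial rate is available, and the argument must rely solely on the integrability $\int(1+|s|)k^2\,ds<\infty$ and the qualitative statement $tk\to 0$ uniformly in $x$, both of which follow directly from (H1). Once this is in hand, all the derivative bounds reduce to the same integrating-factor-plus-Fubini device applied above.
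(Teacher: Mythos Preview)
Your proposal is correct and covers all the claims in the lemma, but the route you take for the asymptotic $B_t/B=\tfrac1t+o(\tfrac1{|t|})$ is genuinely different from the paper's. The paper writes $B_t$ and $B$ explicitly in terms of $B_t(x,\infty)$ and the integrals $\int_t^\infty k^2B\,ds$, $\int_0^t sk^2B\,ds$, and then reduces the question to showing $t^{-1}\int_0^t s^2k^2\,ds\to0$; for this it invokes the monotonicity part of (H1), namely that $kt^{1+\delta}$ is increasing, to write $t^{-1}\int_0^t s^2k^2\,ds\le \tfrac{1}{1-2\delta}(kt)^2\to0$. Your L'H\^opital argument on $(tB_t-B)/B$ bypasses that monotonicity condition entirely and uses only the qualitative $(tk)^2\to0$ together with the uniform positive lower bound on $B_t$; this is shorter, though you should make explicit that uniformity in $x$ survives the L'H\^opital step (it does, once you note the uniform lower bound on $B_t$ for $t\ge T_0$ and the uniform upper bound on $B/t$). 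For the $x$-derivative bounds the two approaches are essentially the same integrating-factor-plus-Fubini scheme; the paper differentiates the integral formula for $\beta$ directly to get $|\partial_x^2\ln B|\le C\int_0^t B^{-2}\int_0^s k^2B^2\,d\tau\,ds$, whereas you differentiate the ODE and pick up the extra source $2\beta_t^2$, whose weighted integral $\int\tau\beta_t^2\,d\tau$ you then control by integration by parts --- both routes work, the paper's being slightly more direct here.
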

\begin{proof}
We only consider the case $t\geq0$ since $t\le 0$ can be treated similarly.  From Lemma 1.1 in \cite{H}, we have for fixed $x,$
\begin{equation*}
\int_0^tk^2(x,t)dt\leq B_t\leq\int_0^tk^2(x,s)ds\exp\left[\int_0^tsk^2(x,s)ds\right].
\end{equation*}
On the other hand, by (H1) and (H3), one has
\begin{equation*}
\begin{split}
%&\sup_x\int_0^tk^2(x,s)ds<C\sup_x\int_0^tk(x,s)ds<\infty,\\
&\sup_x\int_0^tk^2(x,s)sds<C\sup_x\int_0^tk(x,s)ds<\infty,\\
&\inf_x\int_0^tk^2(x,t)dt\leq B_t\leq
\sup_x\int_0^tk^2(x,s)ds\exp\left[\sup_x\int_0^tsk^2(x,s)ds\right],
\end{split}
\end{equation*}
then there exist positive constant $C_1$ and $C_2$ such that
$$C_1|t|\leq B(x,t)\leq C_2|t|$$
 for large $|t|.$ In addition, from (\ref{gauss}) we have
\begin{equation}\label{ga1}
\partial_t\partial_t\ln B=k^2-(\partial_t\ln B)^2,
\end{equation}
which implies that
$$|\partial_t\ln B|\leq\int_0^tk^2(x,s)ds\leq C.$$
%So $\partial_t\ln B$ is bounded.
Differentiating (\ref{ga1}) with respect to $x$ yields the following equation for $\partial_x\partial_t\ln B(x,t),$
$$\partial_t(\partial_x\partial_t\ln B)=2kk_x-2\partial_x\partial_t\ln B\partial_t\ln B,$$
then
$$\partial_x\partial_t\ln B=\frac{1}{B^2}\int_0^t2kk_xB^2ds.$$
Noting that $B_x( x, 0)=0$, we have
\begin{equation*}
\begin{split}
|\partial_x\ln B|&=\left|\int_0^t\frac{1}{B^2}\int_0^s2kk_xB^2d\tau ds\right|
\leq C\int_0^t\frac{1}{B^2(x. s)}\int_0^sk^2(x, \tau)B^2(x, \tau)d\tau ds\\
&= C\int_0^tk^2(x, \tau)B^2(x, \tau)\int_\tau^t\frac{1}{B^2(x. s)}dsd\tau
%&\leq C\int_0^tk^2(x, \tau)\tau^2\int_\tau^t\frac{1}{s^2}dsd\tau\\
\leq C\int_0^tk^2(x, \tau)\tau d\tau\leq C,
\end{split}
\end{equation*}
where we have used the condition (H2),  that is,  $|k_x|\leq Ck$. Hereafter $C$  denotes a generic
positive constant. % independent of any parameter.
Similarly, we have
\begin{equation*}
\begin{split}
|\partial_x\partial_x\ln B|&=\left|\partial_x\left(\int_0^t\frac{1}{B^2}\int_0^s2kk_xB^2d\tau ds\right)\right|\leq\left|\int_0^t\frac{2B_x}{B^3}\int_0^skk_xB^2d\tau ds\right|\\
&\quad +\left|\int_0^t\frac{1}{B^2}\int_0^s\left(2k_xk_xB^2+2kk_{xx}B^2
+4kk_xBB_x\right)d\tau ds\right|\\
%&\leq \int_0^t\frac{2}{B^2}|\partial_x\ln B|\int_0^s|\partial_x\ln k|k^2B^2d\tau %ds+\int_0^t\frac{1}{B^2}\int_0^s4k^2B^2(|\partial_x\ln k|^2+|\partial_x^2\ln k|)d\tau ds\\
%&~~+\int_0^t\frac{1}{B^2}\int_0^s2k^2B^2|\partial_x\ln k||\partial_x\ln B|d\tau ds
&\leq C\int_0^t\frac{1}{B^2}\int_0^sk^2B^2d\tau ds\leq C,
\end{split}
\end{equation*}
and
$$|B\partial_x\partial_t\ln B|\leq \frac{1}{B}\int_0^t2k|k_x|B^2ds\leq C\int_0^tsk^2(x,s)ds\leq C.$$
Thus,  $\partial_x^i\ln B, i=1, 2$ and $B\partial_x\partial_t\ln B$
are bounded for large $|t|.$\par
Next we shall study the asymptotic behavior of $\partial_t\ln B$ for large $t$. Since $B_{tt}>0$ and $B_t$ is bounded, $B_t(x, \infty)$ exists. Moreover,
\begin{equation*}
\begin{split}
&B_t(x,t)=B_t(x,\infty )-\int_t^\infty Bk^2(x,s)ds,\\
&B(x,t)=1+B_t(x,\infty)t-t\int_t^\infty k^2Bds-\int_0^tk^2Bsds.
\end{split}
\end{equation*}
Then
\begin{equation*}
\begin{split}
\frac{B_t}{B}&=\frac{B_t(x,\infty )-\int_t^\infty Bk^2(x,s)ds}
{1+B_t(x,\infty)t-t\int_t^\infty k^2Bds-\int_0^tBk^2sds}\\
%&=\frac{1}{t}+\frac{\int_0^tBk^2sds-1}{t(1+B_t(x,\infty)t-t\int_t^\infty %k^2Bds-\int_0^tBk^2sds)}\\
&=\frac{1}{t}+\frac{1}{t}\frac{t^{-1}\int_0^tBk^2sds-t^{-1}}{t^{-1}+B_t(x,\infty)-\int_t^\infty k^2Bds-t^{-1}\int_0^tBk^2sds}.
\end{split}
\end{equation*}
From condition (H1),  $kt^{1+\delta}$ is increasing for large $t$ with $\delta\in(0, \frac12).$ Then it follows from $k=o(\frac{1}{|t|})$ that
\begin{equation}\label{ktdecay}
\lim_{t\rightarrow\infty}t^{-1}\int_0^tk^2s^2ds\leq \lim_{t\rightarrow\infty}k^2t^{1+2\delta}\int_0^ts^{-2\delta}ds\le \lim_{t\rightarrow\infty}\frac{1}{1-2\delta}k^2t^2=0.
\end{equation}
%$$\lim_{t\rightarrow\infty}t^{-1}\int_0^tBk^2sds\leq C\lim_{s\rightarrow\infty}k^2(x, s)s^2=0.$$
%$$\lim_{t\rightarrow\infty}\int_t^\infty k^2Bds\leq C\lim_{t\rightarrow\infty}\int_t^\infty k^2sds=0$$
 %follows from
%$$\sup_x\int_0^\infty k^2sds\leq C\sup_x\int_0^\infty kds<\infty.$$
Noting that $$B_t(x, \infty)\geq\inf_x\int_0^\infty k^2(x, s)ds>0,$$ one has
$$\lim_{t\rightarrow\infty}\frac{t^{-1}\int_0^tBk^2sds-t^{-1}}{t^{-1}+B_t(x,\infty)-\int_t^\infty k^2Bds-t^{-1}\int_0^tBk^2sds}=0, $$
that is,  $\frac{B_t}{B}=\frac{1}{t}+o\left(\frac{1}{|t|}\right)$.
This completes the proof of Lemma \ref{lemmaB}.
\end{proof}

Now we turn to the proof of Theorem \ref{thm1}. Plugging  all the  Christoffel symbol formulas \eqref{christoffel2} into (\ref{wz}), we obtain
 \begin{equation}\label{Riemann}
 \begin{split}
&w_t+zw_x=-\left(\frac{B_t}{B}-\frac{k_t}{2k}\right)w-
\left(\frac{B_t}{B}+\frac{k_t}{2k}\right)z-BB_tw^2z
-\left(\frac{B_x}{B}+\frac{k_x}{2k}\right)wz+\frac{k_x}{2k}w^2,\\
&z_t+wz_x=-\left(\frac{B_t}{B}-\frac{k_t}{2k}\right)z-
\left(\frac{B_t}{B}+\frac{k_t}{2k}\right)w-BB_twz^2
-\left(\frac{B_x}{B}+\frac{k_x}{2k}\right)wz+\frac{k_x}{2k}z^2.
\end{split}
\end{equation}
Motivated by the observation   in Section \ref{ode}  that $w, z$ decay at the same order as $k$ for the ODE system (\ref{Riemann ODE}),   we make the following variable transformation to introduce the weighted Riemann invariants:
$$r=\frac{w}{k}\quad \text{ and } \quad  s=\frac{z}{k},$$
then \eqref{Riemann} is rewritten as
\begin{equation}\label{rs}
\begin{split}
r_t+ksr_x&=-\left(\frac{B_t}{B}+\frac{k_t}{2k}\right)(r+s)-\left(\frac{B_x}{B}+\frac{3k_x}{2k}\right)krs+
                        \frac{k_x}{2k}kr^2-BB_tk^2r^2s\\
                        &=:f(r, s, x, t),\\
s_t+krs_x&=-\left(\frac{B_t}{B}+\frac{k_t}{2k}\right)(r+s)-\left(\frac{B_x}{B}+\frac{3k_x}{2k}\right)krs+
                          \frac{k_x}{2k}ks^2-BB_tk^2rs^2\\
                          &=:g(r, s, x, t).
\end{split}
\end{equation}
The system  (\ref{wz}) is equivalent to \eqref{rs} if $r>s$ holds.  We focus on the system \eqref{rs} from now on. Denote
 \begin{equation*}
 \partial_\beta=\partial_t+ks\partial_x, \quad  \partial_\alpha=\partial_t+kr\partial_x,
 \end{equation*}
then we have the following system  along the characteristics:
\begin{equation}\label{chr}
\partial_\beta r=f,\quad \partial_\alpha s=g.
\end{equation}
We now derive a  system for $r-s$ similar to the ODE system \eqref{ode1}.
%Taking $(\ref{rs})_1-(\ref{rs})_2$ yields
Substracting the second equation from the first one in the system \eqref{rs} yields
\begin{equation}\label{r-s}
\begin{split}
&\partial_\beta(r-s)=(r-s)Q+k\tilde{s},\\
&\partial_\alpha(r-s)=(r-s)Q+k\tilde{r},
\end{split}
\end{equation}
where
\begin{equation}\label{deri}
\tilde{r}=:(r-s)r_x,~~\tilde{s}=:(r-s)s_x,\end{equation} and
\begin{equation}\label{Q}
Q=\frac{f-g}{r-s}=-BB_tk^2rs+\frac{k_x}{2k}(r+s)k.
\end{equation}
Since the system \eqref{chr} has two characteristics, we have additional derivative terms $\tilde{r}, \tilde{s}$ in \eqref{r-s}. Similarly, we have the following system for $r+s$:
\begin{equation}\label{r+s}
\begin{split}
\partial_\beta(r+s)=&-k\tilde{s}-(r+s)
\left[\left(\frac{2B_t}{B}+\frac{k_t}{k}\right)+BB_tk^2rs\right]\\
&-2\left(\frac{B_x}{B}+\frac{3k_x}{2k}\right)krs+\frac{k_x}{2k}k(r^2+s^2),\\
\partial_\alpha(r+s)=&k\tilde{r}-(r+s)
\left[\left(\frac{2B_t}{B}+\frac{k_t}{k}\right)+BB_tk^2rs\right]\\
&-2\left(\frac{B_x}{B}+\frac{3k_x}{2k}\right)krs+\frac{k_x}{2k}k(r^2+s^2).
\end{split}
\end{equation}
Differentiating (\ref{rs}) with respect to $x$,  we derive the following system for $\tilde{r}$ and $\tilde{s}$:\begin{equation}\label{derivative}
\begin{split}
&\partial_\beta\tilde{r}=\left(Q+f_r-k_xs\right)
\tilde{r}+f_s\tilde{s}+(r-s)\delta_xf,\\
&\partial_\alpha\tilde{s}=g_r\tilde{r}+
\left(Q+g_s-k_xr\right)\tilde{s}+(r-s)\delta_xg,
\end{split}
\end{equation}
where
\begin{equation}\label{fr}
\begin{split}
&f_r(r, s, x, t)=g_s(s, r, x, t)=-\frac{k_t}{2k}-\frac{B_t}{B}-2BB_tk^2rs-
\left(\frac{B_x}{B}+\frac{3k_x}{2k}\right)ks+\frac{k_x}{k}kr,\\
&f_s(r, s, x, t)=g_r(s, r, x, t)=-\frac{k_t}{2k}-\frac{B_t}{B}-BB_tk^2r^2-
\left(\frac{B_x}{B}+\frac{3k_x}{2k}\right)kr,
\end{split}
\end{equation}
and $\delta_xf$ denotes the derivative with respect to $x$ of the coefficients of $r$ and $s$ in $f,$ i.e.
\begin{equation}\label{deltaxf}
\begin{split}
&\delta_xf=-\left(\frac{B_t}{B}+\frac{k_t}{2k}\right)_x(r+s)
-\left[\left(\frac{B_x}{B}+\frac{3k_x}{2k}\right)k\right]_xrs+
\left(\frac{k_x}{2k}k\right)_xr^2-(BB_tk^2)_xr^2s,\\
&\delta_xg=-\left(\frac{B_t}{B}+\frac{k_t}{2k}\right)_x(r+s)
-\left[\left(\frac{B_x}{B}+\frac{3k_x}{2k}\right)k\right]_xrs+
\left(\frac{k_x}{2k}k\right)_xs^2-(BB_tk^2)_xrs^2.
\end{split}
\end{equation}

%\begin{remark}
%The transformation
%  $$r=\frac{w}{k}, ~~s=\frac{z}{k},$$
%and considering the equations for $r+s$ and $r-s$ based on our observation are our new approaches in handling the 2-dimensional isometric immersion problem.
%\end{remark}

Now we are ready  to  prove Theorem \ref{thm1}. The strategy is based on the local existence and   {\it a priori}   estimates. The local existence can be obtained  by a lemma of \cite{H} that
holds for general hyperbolic systems. To derive {\it a priori}   estimates, we consider the following hyperbolic system:
\begin{equation}\label{ghs}
\begin{split}
&u_t+\lambda_1u_x=a_{11}u+a_{12}v+R_1,\\
&v_t+\lambda_2v_x=a_{21}u+a_{22}v+R_2,
\end{split}
\end{equation}
with the initial data:
$$u(x,0)=\varphi_1(x), \quad v(x,0)=\varphi_2(x),$$
where   $\lambda_i$,  $a_{ij}$,   $R_i$, ${i,j=1,2},$ are all $C^1$ smooth functions of $(x,t)$, and $\lambda_1\leq\lambda_2$.
For each $\lambda_i,$ from any point $(x,t),$ we can draw a backward characteristic curve
$X=\Gamma_i(\tau;x,t)$ defined by the   following ODE:
\begin{equation}
\frac{dX}{d\tau}=\lambda_i(X,\tau), \, \tau\leq t, \quad \text{with }X(t)=x.
\end{equation}
It is obvious that  $\Gamma_2(\tau;x,t)\leq\Gamma_1(\tau;x,t),$
for $0<\tau<t<T.$
\par
For each point $P=(x^*,t^*),$ denoted by $P(x^*,t^*)$,  $t^*\in(0,T],$ we can draw a backward characteristic triangle $\Delta_P^0,$
\begin{equation*}
\Delta_P^0=\{(x,t): \, \Gamma_2(t;x^*,t^*)\leq x \leq\Gamma_1(t;x^*,t^*), \, 0\leq t\leq t^*\}.
\end{equation*}
Denote
$$H=\max_{(x,t)\in\Delta_P^0}\{|a_{ij}|,|\partial_x\lambda_i|,|\partial_xa_{ij}|, i,j=1,2\},$$
and $I(\tau)=[\Gamma_2(\tau; x^*, t^*),\Gamma_1(\tau; x^*, t^*)].$  Then we have the following lemma from \cite{H}:

\begin{lemma}[Hong \cite{H}] \label{lemmageneral}
	Let $(u,v)$ be a $C^1$ smooth solution to (\ref{ghs}) with $R_i=0, i=1,2$ in $\Delta_P^0.$
	Then for $(x,t)\in\Delta_P^0,$
	\begin{equation}
	|u|, |v|, |u_x|, |v_x|\leq\max_{x\in I(0)}\{|\varphi_i|, |\partial_x\varphi_i|\}\exp(5Ht).
	\end{equation}
\end{lemma}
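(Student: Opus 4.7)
The plan is a Gronwall-type argument along characteristics, applied simultaneously to $(u,v)$ and its spatial derivatives. The geometric input is that $\Delta_P^0$ is invariant under both characteristic flows: for any $(x^*,t^*)\in\Delta_P^0$, the backward curves $\Gamma_1(\cdot;x^*,t^*)$ and $\Gamma_2(\cdot;x^*,t^*)$ stay inside $\Delta_P^0$ because the lateral sides of the triangle are themselves characteristics and $\lambda_1\le\lambda_2$. Thus every coefficient bound by $H$ applies uniformly along these curves and no boundary contribution appears.

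First, I would integrate the first equation of (\ref{ghs}) (with $R_1\equiv 0$) along $\Gamma_1(\cdot;x^*,t^*)$ and the second along $\Gamma_2(\cdot;x^*,t^*)$, from $\tau=0$ up to $\tau=t^*$. Using $|a_{ij}|\le H$, this yields
\begin{equation*}
|u(x^*,t^*)|\le\max_{I(0)}|\varphi_1|+H\int_0^{t^*}(|u|+|v|)\,d\tau,
\end{equation*}
with integrands evaluated on the respective characteristics, together with the identical estimate for $|v(x^*,t^*)|$. To pick up the derivatives, I would differentiate (\ref{ghs}) in $x$ to obtain a linear transport system for $(u_x,v_x)$ with the same speeds:
\begin{equation*}
(u_x)_t+\lambda_1(u_x)_x=(a_{11}-\dx\lambda_1)u_x+a_{12}v_x+(\dx a_{11})u+(\dx a_{12})v,
\end{equation*}
plus the analogue for $v_x$. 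Since $H$ also bounds $|\dx\lambda_i|$ and $|\dx a_{ij}|$, integrating along the same two characteristics gives
\begin{equation*}
|u_x(x^*,t^*)|\le\max_{I(0)}|\dx\varphi_1|+\int_0^{t^*}\bigl(2H|u_x|+H|v_x|+H|u|+H|v|\bigr)\,d\tau,
\end{equation*}
and the parallel bound for $|v_x|$.

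Finally, set $M(t):=\sup\{|u|,|v|,|u_x|,|v_x|\}$ over $\{(x,s)\in\Delta_P^0:s\le t\}$. The first two inequalities give $|u|,|v|\le\max_{I(0)}|\varphi_i|+2H\int_0^t M(\tau)\,d\tau$, while the last two give $|u_x|,|v_x|\le\max_{I(0)}|\dx\varphi_i|+5H\int_0^t M(\tau)\,d\tau$. Taking the overall maximum and applying Gronwall's inequality produces
\begin{equation*}
M(t)\le\max_{I(0)}\{|\varphi_i|,|\dx\varphi_i|\}\exp(5Ht),
\end{equation*}
which is exactly the claim. There is no real analytic obstacle here; the only subtlety is the characteristic geometry that makes $\Delta_P^0$ invariant so the coefficient bound $H$ is uniform, and some careful bookkeeping to see that the constant in the exponent is no worse than $5H$ (the worst of the four rows being the $u_x$ equation, whose coefficients contribute $2H+H+H+H=5H$).
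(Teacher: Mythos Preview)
Your argument is correct and is the standard Gronwall-along-characteristics proof. Note, however, that the paper does not supply its own proof of this lemma: it is quoted from Hong \cite{H} without argument, so there is nothing in the paper to compare against beyond the statement itself. Your approach is precisely the expected one for such a result, and your bookkeeping for the constant $5H$ (coming from the differentiated $u_x$-equation, where the coefficient $a_{11}-\partial_x\lambda_1$ contributes $2H$ and the remaining three terms contribute $H$ each) matches the claimed bound.
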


If we choose sufficiently small  initial data,   the above lemma tells us that the lifespan of the smooth solutions could be large and the solutions still keep small in the region of local existence. Based on this property, we begin to
derive the  {\it a priori}   estimates in the region $t\ge t_0$ with large $t_0$ to be determined.
For each $P(x^*,t^*),$ $t^*>t_0$, we  denote
\begin{equation*}
\Delta_P=\{(x, t): \, \Gamma_2(t; x^*, t^*)\leq x \leq\Gamma_1(t; x^*, t^*), \, t_0\leq t\leq t^*\},
\end{equation*} and  $I(t_0)=[\Gamma_2(t_0) ,\Gamma_1(t_0)],$
where $\Gamma_i(t)=\Gamma_i(t;  x^*, t^*), ~i=1, 2,$ are the characteristic curves corresponding to
$\lambda_i, i=1, 2, $
%$ks$ and $kr$
respectively,  passing through the point $P(x^*, t^*).$

Also we can assume that the system \eqref{rs} has a  $C^1$ smooth solutions with $r-s>0$ satisfying the  {\it a priori}   assumptions:
$$-10a_0\varepsilon\leq s(x, t)<r(x,t)\leq10a_0\varepsilon,\quad
|\partial_xr(x, t)|\leq 10a_0\mu,\quad
|\partial_xs(x, t)|\leq 10a_0\mu,$$
where $\varepsilon,$ $\mu$ are small constants
to be determined later and  $$a_0=2+\sup_x\int_{t_0}^\infty k(x, s)ds<+\infty.$$ From
the fomulas of $\tilde{r},\tilde{s}$, it holds that
\begin{equation}\label{tilde}
|\tilde{r}(x, t)|\leq 200a_0^2\mu\varepsilon,\quad |\tilde{s}(x, t)|\leq 200a_0^2\mu\varepsilon.
\end{equation}
We now prove  the following key lemma:

\begin{lemma}\label{lemmabound}
Assume that the assumptions {\rm (H1)-(H3)} of Theorem \ref{thm1} hold,  and for any $x\in I(t_0),$
\begin{equation*}
\begin{split}
&|r(x, t_0)|\leq\varepsilon, \quad |s(x, t_0)|\leq\varepsilon,\\
&|\partial_x r(x, t_0)|\leq\mu, \quad |\partial_xs(x, t_0)|\leq\mu.
\end{split}
\end{equation*}
Then,  in $\Delta_P$ for $t>t_0$ and  $t_0$   large enough,
\begin{equation*}
\begin{split}
&|r(x, t)|\leq a_0\varepsilon, \quad |s(x, t)| \leq a_0\varepsilon,\\
&|\tilde{r}(x, t)|\leq a_0\mu\varepsilon, \quad |\tilde{s}(x, t)|\leq a_0\mu\varepsilon.
\end{split}
\end{equation*}
\end{lemma}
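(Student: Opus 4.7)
The plan is to close a bootstrap argument on $r$, $s$, $\tilde r$, $\tilde s$ in the backward characteristic triangle $\Delta_P$, by integrating \eqref{r-s}, \eqref{r+s}, and \eqref{derivative} along $\alpha$- and $\beta$-characteristics, with $t_0$ taken so large that $a_0-2=\sup_x\int_{t_0}^\infty k(x,\tau)\,d\tau$ is small. The motivation is the two observations of Section \ref{ode}: that $r+s$ decays strictly faster than $r-s$, which translates into positivity of the dissipative coefficient $\frac{2B_t}{B}+\frac{k_t}{k}$ in the $r+s$ equation; and the integrability of $k$ along horizontal lines controls the remaining source terms.

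First I would bound $r-s$ via \eqref{r-s}. Under the bootstrap $|r|,|s|\le 10 a_0\varepsilon$ and $|\tilde s|\le 200 a_0^2\mu\varepsilon$, the coefficient $Q=-BB_tk^2rs+\frac{k_x}{2k}(r+s)k$ from \eqref{Q} satisfies $|Q|\le C\varepsilon(k+tk^2)$, and $\int_{t_0}^\infty(k+tk^2)\,d\tau\le C(a_0-2)$ by (H1), Lemma \ref{lemmaB}, and $tk=o(1)$. A Grönwall estimate along either characteristic then gives $|(r-s)(x,t)|\le 2\varepsilon(1+o_{t_0\to\infty}(1))+C\mu\varepsilon(a_0-2)$. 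In parallel, for $r+s$ the coefficient $P=\frac{2B_t}{B}+\frac{k_t}{k}+BB_tk^2rs$ satisfies $P\ge\frac{1-\delta}{t}(1+o(1))$ for large $t$ by Lemma \ref{lemmaB} and (H1), so integrating \eqref{r+s} along $\beta$-characteristics yields a decay factor $(t_0/t)^{1-\delta-o(1)}$, whence
$$|(r+s)(x,t)|\le 2\varepsilon\,(t_0/t)^{1-\delta-o(1)}+C(\mu+\varepsilon)\varepsilon(a_0-2),$$
the forcing being the integrable $k\tilde s+O(k\varepsilon^2)$ in \eqref{r+s}. Combining these bounds gives $|r|,|s|\le a_0\varepsilon$ for $t_0$ large and $\varepsilon,\mu$ sufficiently small.

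For the derivatives $\tilde r,\tilde s$ I would apply the same strategy to \eqref{derivative}. Using \eqref{fr}, the diagonal coefficients $Q+f_r-k_xs$ and $Q+g_s-k_xr$ decompose as an integrable piece plus the non-integrable $-\frac{k_t}{2k}-\frac{B_t}{B}\le -\frac{1-\delta}{2t}(1+o(1))$, giving a decay factor $(t_0/t)^{(1-\delta)/2}$; the sources $(r-s)\delta_xf$, $(r-s)\delta_xg$ are, by (H2) and \eqref{deltaxf}, bounded by $C\varepsilon$ times integrable quantities involving $k$, $tk^2$, and their $x$-derivatives. The main obstacle is the off-diagonal cross term $f_s\tilde s$ (and symmetrically $g_r\tilde r$), whose coefficient $|f_s|\sim 1/t$ is \emph{not} integrable on $[t_0,\infty)$; mirroring the mechanism of Step 2, I would resolve this by tracking $\max(|\tilde r|,|\tilde s|)$ along the two characteristic families simultaneously, so that the shared decay $(t_0/t)^{(1-\delta)/2}$ renders $\int_{t_0}^t|f_s\tilde s|\,d\tau$ convergent and $o_{t_0\to\infty}(\mu\varepsilon)$. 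This is the precise point where $\delta<1/2$ is used—we need the dissipative rate $\frac{1-\delta}{2}$ to be strictly positive—and closing the bootstrap then yields $|\tilde r|,|\tilde s|\le a_0\mu\varepsilon$, completing the proof.
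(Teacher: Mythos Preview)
Your Step 1 is close in spirit to the paper's argument, though the paper uses explicit barrier functions rather than raw Gr\"onwall: it sets $\phi_1=2\varepsilon\frac{k(x,t_0)t_0^2}{k(x,t)t^2}+\frac{\varepsilon}{k(x,t)t^2}\int_{t_0}^tk^2s^2\,ds$ and shows $|r+s|\le\phi_1\le C\varepsilon kt$, and $\phi_2=\varepsilon(2+\int_{t_0}^tk\,ds)$ for $r-s$. The pointwise bound $|r+s|\le C\varepsilon kt$ (not merely $(t_0/t)^{1-\delta}$) is what feeds into Step 2 to make $|\delta_xf|\le C\varepsilon k$ integrable; your decay form also yields an integrable $\frac{1}{t}|r+s|$, so this is not fatal.

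The real gap is in your treatment of the off-diagonal term $f_s\tilde s$ in Step 2. Your claim that the shared decay $(t_0/t)^{(1-\delta)/2}$ renders $\int_{t_0}^t|f_s\tilde s|\,d\tau=o_{t_0\to\infty}(\mu\varepsilon)$ is false: if $|\tilde s|\le A(t_0/\tau)^{(1-\delta)/2}$ and $|f_s|\sim C/\tau$, then
\[
\int_{t_0}^\infty|f_s\tilde s|\,d\tau\;\le\;CA\int_{t_0}^\infty\tau^{-1}(t_0/\tau)^{(1-\delta)/2}\,d\tau\;=\;\frac{2CA}{1-\delta},
\]
which is $O(A)$ with a constant independent of $t_0$. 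The diagonal decay rate $(1-\delta)/2$ exactly matches the $1/\tau$ size of $|f_s|$, so after the integrating factor the cross term becomes $\int\tau^{-1}\,d\tau$ --- there is no smallness to close the bootstrap. Worse, the hypotheses (H1)--(H3) give only a one-sided bound $k_t/k\ge-(1+\delta)/t$; there is no upper bound on $k_t/k$, so $|f_s|$ need not be $\le C/t$ at all.

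The paper resolves this not by decay but by \emph{sign}: from $\partial_t\ln(kt^{1+\delta})\ge0$ and Lemma \ref{lemmaB} one gets
\[
f_s=-\Bigl(\frac{B_t}{B}+\frac{k_t}{2k}\Bigr)+\text{h.o.t.}\;\le\;-\frac{1-\delta}{2t}+o(1/t)\;\le\;0,
\]
and likewise $g_r\le0$. This off-diagonal sign condition is precisely the hypothesis of the comparison principle Lemma \ref{lemmasystem} (Hong). Setting $\phi_3=\mu\varepsilon(2+\int_{t_0}^tk\,ds)$ and writing the system for $(\tilde r-\phi_3,\tilde s+\phi_3)$, the paper checks $R_3<0$, $R_4>0$, $f_s\le0$, $g_r\le0$, and the initial sign, then concludes $|\tilde r|,|\tilde s|\le\phi_3\le a_0\mu\varepsilon$ directly. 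The non-integrable $1/t$ piece of $f_s$ is never estimated in absolute value; only its sign matters. This structural use of $f_s,g_r\le0$ is the missing idea in your Step 2.
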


\begin{proof} We divide the proof into two steps: estimating $r, s$ in Step 1 and $\tilde{r}, \tilde{s}$ in Step 2.

{\bf Step 1.}\quad
We first estimate $r+s$ and $r-s.$
For $r+s,$ we introduce a control function for the system \eqref{r+s}. Let
$$\phi_1=2\varepsilon\frac{k(x, t_0)t_0^2}{k(x,t)t^2}
+\frac{\varepsilon}{k(x, t)t^2}\int_{t_0}^tk^2(x, s)s^2ds,$$
then
\begin{equation*}
\begin{split}
\phi_1&\leq\frac{2\eps k(x, t_0)t_0^{1+\delta}t_0^{1-\delta}}
{k(x, t)t^{1+\delta}t^{1-\delta}}
+\eps \frac{kt^2kt^{2\delta}}{kt^2}\int_{t_0}^ts^{-2\delta}ds\\
&\leq\frac{C\eps}{t^\delta}+\frac{\varepsilon kt}{1-2\delta}\leq C\varepsilon kt\le C\varepsilon,
\end{split}
\end{equation*}
due to the fact that $kt^{1+\delta}, \delta\in (0,\frac12)$ is increasing with respect to $t$ for any fixed $x$.
It is straightforward to check that $\phi_1$ satisfies
\begin{equation}\label{cont1}
\begin{split}
&\partial_t\phi_1=-\phi_1\left(\frac{k_t}{k}+\frac{2}{t}\right)+\varepsilon k,\\
&\partial_x\phi_1=-\frac{k_x}{k}\phi_1+2\varepsilon
\frac{\partial_xk(x, t_0)}{k(x, t_0)}\frac{k(x, t_0)t_0^2}{k(x,t)t^2}+
\frac{\varepsilon}{k(x, t)t^2}\int_{t_0}^t2k(x, s)\partial_xk(x, s)s^2ds,
\end{split}
\end{equation}
which, together with  (H2), yields that $|\partial_x\phi_1|\leq C\varepsilon$.
From $\eqref{r+s}_1$ and \eqref{cont1}, we have
\begin{equation*}
\partial_\beta(r+s-\phi_1)=(r+s-\phi_1)
\left[-\left(\frac{2B_t}{B}+\frac{k_t}{k}\right)-BB_tk^2rs\right]+R_1,
\end{equation*}
where $$R_1=-\partial_\beta\phi_1-\phi_1
\left[\left(\frac{2B_t}{B}+\frac{k_t}{k}\right)+BB_tk^2rs\right]-k\tilde{s}
-2\left(\frac{B_x}{B}+\frac{3k_x}{2k}\right)krs+\frac{k_x}{2k}k(r^2+s^2).$$
Again using \eqref{cont1} and Lemma \ref{lemmaB}, we obtain for small $\varepsilon$ and $\mu$ that
\begin{equation}\label{r1}
R_1\leq-\varepsilon k(1-C\varepsilon-C\mu)<0.
\end{equation}
It should be noted that the  {\it a priori}   assumption \eqref{tilde} for the derivative $\tilde{s}$ plays key role  in the analysis  of \eqref{r1}.
Then we get
\begin{equation*}
\partial_\beta(r+s-\phi_1)\le (r+s-\phi_1)
\left[-\left(\frac{2B_t}{B}+\frac{k_t}{k}\right)-BB_tk^2rs\right]
\end{equation*}
which  yields $r+s\leq\phi_1$ since $\phi_1(t_0)=2\varepsilon\geq|r+s|(x, t_0).$
Similarly, we can prove $r+s\geq-\phi_1$. Thus $|r+s|\leq\phi_1$ holds. Note that $\phi_1$ also satisfies
\begin{equation}
\phi_1\le 2\varepsilon
+\varepsilon\int_{t_0}^tk ds\le a_0\varepsilon.
\end{equation}
It then holds that
\begin{equation}\label{rjias}
|r+s|\leq \min\{a_0\varepsilon, ~~C\varepsilon kt\}.
\end{equation}
\par
For $r-s,$ let
$$\phi_2=\varepsilon\left(2+\int_{t_0}^tk(x, s)ds\right),$$
then $|\phi_2|\leq a_0\eps.$
From $\eqref{r-s}_1,$ we get
\begin{equation*}
\partial_\beta(r-s-\phi_2)=Q(r-s-\phi_2)+R_2,
\end{equation*}
where $$R_2=-\partial_\beta\phi_2+\phi_2Q+k\tilde{s}.$$
Since $\partial_t\phi_2=\varepsilon k$ and
$$|\partial_x\phi_2|=\varepsilon \left|\int_{t_0}^tk_xds\right|\leq C\varepsilon \int_{t_0}^tkds\leq C\varepsilon,$$
the  formula \eqref{Q} for $Q$ implies $|Q|\leq C\eps k$. It then holds that
\begin{equation*}
R_2\leq-\varepsilon k(1-C\mu-C\varepsilon)<0,
\end{equation*}
by choosing $\varepsilon$ and $\mu$ small enough. Therefore
\begin{equation*}
\partial_\beta(r-s-\phi_2)<Q(r-s-\phi_2),
\end{equation*}
which leads to $r-s\leq\phi_2$ due to  $\phi_2(t_0)=2\varepsilon\geq|r-s|(x, t_0).$  Similarly, $r-s\geq-\phi_2$ holds. Thus,
$$|r-s|\leq\phi_2\leq a_0\varepsilon.$$
Therefore, we have
\begin{equation*}
|r|\leq\frac{|r+s|}{2}+\frac{|r-s|}{2}\leq a_0\varepsilon,\quad |s|\leq\frac{|r+s|}{2}+\frac{|r-s|}{2}\leq a_0\varepsilon.
\end{equation*}

{\bf Step 2.}\quad
Before deriving the estimates on $\tilde{r}$ and $\tilde{s}$,
we introduce a useful comparison principle for hyperbolic system, which
is a variant   of Lemma 2.2 of \cite{H}. % by taking $\tilde{u}=u,\tilde{v}=-v$.
\begin{lemma}[Hong \cite{H}] \label{lemmasystem}
	Let $(u,v)$ be the $C^1$ smooth solutions to (\ref{ghs}) in $\Delta_P$ with the following conditions:
	\begin{equation*}
	\begin{split}
	&a_{12}(x,t)\le 0, \quad a_{21}(x,t) \leq0\text{ in } \Delta_P,\\
	&\varphi_1(x)\leq0\,  (\geq0),\quad \varphi_2(x)\geq0\,  (\leq0) \text{ for all } x\in [\Gamma_2(t_0;x^*, t^*),\Gamma_1(t_0; x^*, t^*)],\\
	& R_1(x,t)<0\,  (>0), \quad R_2(x,t)>0\,  (<0) \text{ in } \Delta_P.
	\end{split}
	\end{equation*}
	Then for $(x,t)\in\Delta_P$ with $t>t_0,$ $u(x,t)<0\, (>0),$ $v(x,t)>0\,  (<0).$
\end{lemma}

Now we define another control function
$$\phi_3=\mu\varepsilon\left(2+\int_{t_0}^tk(x,s)ds\right),$$
which satisfies $\phi_3\leq a_0\mu\varepsilon.$
Then we can derive the following equations for $\tilde{r}-\phi_3$ and $\tilde{s}+\phi_3$:
\begin{equation*}
\begin{split}
&\partial_\beta(\tilde{r}-\phi_3)=(Q+f_r-k_xs)(\tilde{r}-\phi_3)
+f_s(\tilde{s}+\phi_3)+R_3,\\
&\partial_\alpha(\tilde{s}+\phi_3)=g_r(\tilde{r}-\phi_3)
+(Q+g_s-k_xr)(\tilde{s}+\phi_3)+R_4,
\end{split}
\end{equation*}
where
\begin{equation*}
\begin{split}
&R_3=-\partial_\beta\phi_3+\phi_3(Q+f_r-k_xs-f_s)+(r-s)\delta_xf,\\
&R_4=\partial_\alpha\phi_3+\phi_3(g_r-Q-g_s+k_xr)+(r-s)\delta_xg.
\end{split}
\end{equation*}
From \eqref{deltaxf} and
the  estimate  $|r+s|\le C\varepsilon kt$, one has
$$|\delta_xf|\leq \frac{C}{t}|r+s|+C\varepsilon^2k\leq C\varepsilon k.$$
Similarly, we have $|\delta_xg|\leq C\varepsilon k.$
A direct  calculation yields that $\dt\phi_3=\mu\varepsilon k$ and
$$|\dx\phi_3|=\mu\varepsilon \int_{t_0}^t|k_x|ds\leq C\mu\varepsilon \int_{t_0}^tkds\leq C\mu\varepsilon.$$
From \eqref{fr},  $f_r-f_s$ is bounded by $C\eps k.$
Since $|Q|\leq C\eps k$  and $|k_xs|\leq C\eps k$, then
\begin{equation*}
\begin{split}
&R_3\leq(-\mu+C\varepsilon+C\mu\eps)\eps k<0,\\
&R_4\geq(\mu-C\varepsilon-C\mu\eps)\eps k>0,
\end{split}
\end{equation*}
if $\varepsilon$ is smaller than $\mu$.\par
Noting that $\dt \ln (kt^{1+\delta}) \ge 0$,   one has
$$f_s=-\left(\frac{B_t}{B}+\frac{k_t}{2k}\right)+h.o.t
\leq-\frac{1}{t}+o\left(\frac{1}{t}\right)+\frac{1+\delta}{2t}=-\frac{1-\delta}{2t}+o(\frac{1}{t})\leq0.$$
Similarly we can prove $g_r\leq0.$
The initial data satisfies the following estimates:
$$|\tilde{r}(x, t_0)|\leq2\mu\varepsilon=\phi_3(t_0) \text{ and }
|\tilde{s}(x, t_0)|\leq2\mu\varepsilon=\phi_3(t_0).$$
Then,  from Lemma \ref{lemmasystem}, we have $\tilde{r}\leq\phi_3$ and $\tilde{s}\geq-\phi_3.$ Similarly we can deduce that  $\tilde{r}\geq-\phi_3,$ and $\tilde{s}\leq\phi_3$ by considering the equations for $\tilde{r}+\phi_3$ and $\tilde{s}-\phi_3.$
Therefore we have
$$|\tilde{r}|\leq\phi_3\leq a_0\mu\varepsilon\text{ and }
|\tilde{s}|\leq\phi_3\leq a_0\mu\varepsilon.$$
\end{proof}
\begin{remark}
The estimate $$|r+s|\le C\varepsilon kt$$ implies that $r+s$ decays to zero  due to $k=o(\frac{1}{|t|})$, while $r-s$ may not decay. This means that $w+z$ decays faster than $w-z$ since $w=kr, z=rs$. Thus, we have verified this property observed in the ODE system \eqref{Riemann ODE} even for the general Gauss-Codazzi system \eqref{rs}. This property plays an essential role to close the  {\it a priori}   assumptions of derivatives $\tilde{r},\tilde{s}$ as shown above.
\end{remark}

Next we estimate the lower bound of $r-s$ and  upper bound of $r_x,  s_x$.
\begin{lemma}\label{lemmalow}
Under the conditions of Lemma \ref{lemmabound}, and if for any $x\in I(t_0)$,
$$(r-s)(x,t_0)\geq\varepsilon,$$
then in $\Delta_P$  for $t>t_0$ with  $t_0$ sufficiently large,
\begin{equation*}
\begin{split}
&(r-s)(x,t)\geq\frac{1}{2}\varepsilon,\\
&|\partial_xr(x, t)|\leq 2a_0\mu, \quad |\partial_xs(x, t)|\leq 2a_0\mu,\\
&|\partial_tr(x, t)|\leq C\varepsilon (k+t|k_t|), \quad |\partial_ts(x, t)|\leq C\varepsilon (k+t|k_t|).
\end{split}
\end{equation*}
\end{lemma}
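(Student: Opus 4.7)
The plan is to follow the three-part structure of the statement in order, feeding the bounds supplied by Lemma \ref{lemmabound} into each step.

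For the lower bound on $r-s$, I would work with the first equation of \eqref{r-s} along the $\beta$-characteristic, $\partial_\beta(r-s) = (r-s)Q + k\tilde s$. Using Lemma \ref{lemmaB}, the pointwise bounds $|r|,|s|\le a_0\varepsilon$ and the sharper bound $|r+s|\le C\varepsilon kt$ from Step 1 of Lemma \ref{lemmabound}, together with the decay $k=o(1/|t|)$, I expect to obtain $|Q|\le C\varepsilon k$ for $t\ge t_0$ from the expression \eqref{Q} (the cubic term $BB_tk^2rs$ being controlled via $tk^2 = o(1)\,k$). Integrating along the characteristic with the usual integrating factor, or equivalently running the control-function scheme of Lemma \ref{lemmabound} with constant control $\phi\equiv\varepsilon/2$, and combining $|\tilde s|\le a_0\mu\varepsilon$ with the smallness of $\sup_x\int_{t_0}^\infty k\,ds = a_0-2$ for large $t_0$, yields
\begin{equation*}
|(r-s)(x,t)-(r-s)(x,t_0)| \le C(\varepsilon+\mu)(a_0-2)\varepsilon < \varepsilon/2
\end{equation*}
for $\varepsilon,\mu$ sufficiently small, so that $(r-s)(x,t)\ge\varepsilon/2$.

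The $x$-derivative bounds then come for free from the algebraic identities \eqref{deri}: since $r_x = \tilde r/(r-s)$ and $s_x = \tilde s/(r-s)$, the upper bounds on $|\tilde r|, |\tilde s|$ from Lemma \ref{lemmabound} divided by the lower bound on $r-s$ just established yield $|r_x|, |s_x|\le 2a_0\mu$ directly. Finally, to bound $r_t, s_t$ I would solve the two equations of \eqref{rs} for $r_t = f - ks\,r_x$ and $s_t = g - kr\,s_x$, and estimate the right-hand sides term by term. All quadratic, cubic and transport contributions are controlled by $C\varepsilon k$ using Lemma \ref{lemmaB}, the uniform bounds on $r,s,r_x,s_x$ and $tk=o(1)$; the only delicate term is $-\tfrac{k_t}{2k}(r+s)$ in $f$ (and its analogue in $g$), which I would handle using the sharp estimate $|r+s|\le C\varepsilon kt$ to extract the factor $t|k_t|$, producing $|r_t|, |s_t|\le C\varepsilon(k + t|k_t|)$.

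The main obstacle I anticipate is the constant tracking in Step 1. Since both the source $k\tilde s$ and the initial data $(r-s)(x,t_0)$ scale like $\varepsilon$, a naive Gronwall estimate would only give an unspecified constant multiple of $\varepsilon$ rather than the clean factor $1/2$. One must therefore exploit in sequence the smallness of $\int_{t_0}^\infty k\,ds$ for large $t_0$ (pushing $a_0$ arbitrarily close to $2$), the hierarchy $\varepsilon\ll\mu\ll 1$ inherited from Lemma \ref{lemmabound}, and the cancellation encoded in $|Q|\le C\varepsilon k$ to close the estimate with a definite numerical constant.
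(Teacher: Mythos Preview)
Your Steps 2 and 3 match the paper exactly: the $x$-derivative bounds follow algebraically from $r_x=\tilde r/(r-s)$ and $s_x=\tilde s/(r-s)$, and the $t$-derivative bounds come from estimating $r_t=f-ks\,r_x$, $s_t=g-kr\,s_x$ term by term, the only term not of size $C\varepsilon k$ being $\big|\tfrac{k_t}{2k}(r+s)\big|\le C\varepsilon t|k_t|$ via the sharp bound $|r+s|\le C\varepsilon kt$.

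For Step 1 your direct integration with integrating factor works, but the alternative you label ``equivalent'' --- running the comparison scheme with a constant control $\phi\equiv\varepsilon/2$ --- does not. With $\phi$ constant the remainder is $R=Q\phi+k\tilde s$, which carries no definite sign (both pieces may be negative), so the comparison principle gives nothing. The paper instead takes the \emph{decreasing} control
\[
\phi_4=\varepsilon\Big(1-3a_0\mu\int_{t_0}^{t}k(x,s)\,ds\Big),
\]
which still satisfies $\phi_4\ge\varepsilon/2$ for small $\mu$. Now $-\partial_t\phi_4=3a_0\mu\varepsilon k$ supplies a positive contribution to $R_5=-\partial_\beta\phi_4+Q\phi_4+k\tilde s$ that dominates $|Q\phi_4|\le C\varepsilon^2 k$ and $|k\tilde s|\le a_0\mu\varepsilon k$ once $\varepsilon\ll\mu$, forcing $R_5>0$ and hence $r-s\ge\phi_4\ge\varepsilon/2$. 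This choice buys two things over your integration: it avoids bounding $\int k\,d\tau$ \emph{along a moving characteristic} (which (H1) does not directly give --- it bounds the integral at fixed $x$; this is fixable via $|k_x|\le Ck$ but is an extra step you omit), and the constant $1/2$ falls out without any tracking. Finally, you do not need to push $a_0$ close to $2$: the paper keeps $t_0$ (and hence $a_0$) fixed and closes the estimate purely through the smallness of $\mu$ and then $\varepsilon\ll\mu$.
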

\begin{proof}
Let
$$\phi_4=\varepsilon\left(1-3a_0\mu\int_{t_0}^tk(x, s)ds\right),$$
then for small $\mu$, $$\varepsilon\geq\phi_4\geq\frac{1}{2}\varepsilon.$$
From  $\eqref{r-s}$, we get
$$\partial_\beta(r-s-\phi_4)=Q(r-s-\phi_4)+R_5,$$
where  $$R_5=-\partial_\beta\phi_4+Q\phi_4+k\tilde{s}.$$
Again we know that  $\partial_t\phi_4=-3a_0\mu\varepsilon k$ and
$$|\partial_x\phi_4|=3a_0\mu\varepsilon\left|\int_{t_0}^tk_xds\right|\leq C\mu\varepsilon .$$
Thus, from Lemma \ref{lemmabound},
\begin{equation*}
\begin{split}
R_5&\geq3a_0\mu\varepsilon k-C\mu\varepsilon^2k-a_0\mu\varepsilon k-C\varepsilon^2k\\
&=\varepsilon k(2a_0\mu-C\varepsilon\mu-C\varepsilon)>0,
\end{split}
\end{equation*}
by choosing $\varepsilon$ sufficiently smaller than $\mu$. Therefore,
\[
\partial_\beta(r-s-\phi_4)>Q(r-s-\phi_4),
\]
which gives
 $$r-s\geq\phi_4\geq\frac{1}{2}\varepsilon.$$

Finally, we obtain
\begin{equation*}
\begin{split}
&|\partial_xr(x, t)|\leq\frac{|\tilde{r}|}{r-s}
\leq\frac{2a_0\mu\varepsilon}{\varepsilon}=2a_0\mu,\\
&|\partial_xs(x, t)|\leq\frac{|\tilde{s}|}{r-s}
\leq\frac{2a_0\mu\varepsilon}{\varepsilon}=2a_0\mu.
\end{split}
\end{equation*}
Note that
$$r_t=f-ksr_x\text{ and } s_t=g-krs_x,$$
then it is straightforward to deduce that
\begin{equation*}
\begin{split}
&|r_t|\leq \left|\frac{B_t}{B}+\frac{k_t}{2k}\right||r+s|+ C\varepsilon k\leq C\varepsilon(k+t|k_t|),\\
&|s_t|\leq \left|\frac{B_t}{B}+\frac{k_t}{2k}\right||r+s|+ C\varepsilon k\leq C\varepsilon(k+t|k_t|).
\end{split}
\end{equation*}
This completes the proof.
\end{proof}

%Lemma \ref{lemmabound} and Lemma \ref{lemmalow} close the  {\it a priori}   assumptions, thus we get the global existence of $C^1$ solutions of \eqref{rs} in $t>t_0$ with $t_0$ large enough. Now we complete the proof of Theorem \ref{thm1} as follows.\par

We now prove the Theorem \ref{thm1}.

\begin{proof}[Proof of Theorem \ref{thm1}]
%{\textit{\textbf{Proof of Theorem \ref{thm1}: }}
First, we choose sufficiently large $T=t_0$ such that Lemma \ref{lemmabound} and Lemma \ref{lemmalow} hold.
Consider the Cauchy problem for system (\ref{rs}) with initial data $ r(x,0)=\eta_0, s(x,0)=-\eta_0$.
Denote
$$H=1+\sup_{\substack{|t|<T\\ x\in\R}}\{|\dx^i\dt\ln B|, |\dx^i\dt\ln k|, |\dx^i(k\dx\ln B)|, |\dx^ik|,|\dx^2k|, |\dx^i(BB_tk^2)|, i=0,1\}.$$
Let $\eta_0$ be a small constant such that
$$\eta_0\exp(30HT)\leq\varepsilon<1.$$
From Lemma \ref{lemmageneral}, one has
$$|r(x,t)|\leq\varepsilon, \, |s(x,t)|\leq\varepsilon;\quad
 |\partial_x r(x,t)|\leq\varepsilon\le \sqrt{\varepsilon}=:\mu, \, |\partial_x s(x,t)|\leq\varepsilon\le \sqrt{\varepsilon}=:\mu$$
for all $x$ in $\R,$ $0<t\leq T$. On the other hand, integrating (\ref{r-s}) together  with the above estimates yields the following:
$$(r-s)(x,T)\geq\frac{\eta_0}{C(T)}\geq\frac{\varepsilon}{C(T)}$$
for some constant $C(T)$.
Then by  Lemmas \ref{lemmabound} and \ref{lemmalow}, there exists a unique solution to the system (\ref{rs}) with initial data $r(x,T), s(x,T)$ in the region
$t>T$ satisfying
\begin{equation*}
\begin{split}
&-a_0\varepsilon\leq s(x, t)<r(x, t)\leq a_0\varepsilon, \\
&|(r-s)\partial_xr(x,t)|\leq a_0\mu\varepsilon,  \quad |(r-s)\partial_xs(x, t)|\leq a_0\mu\varepsilon,\\
&|\partial_tr(x,t)|\leq C\varepsilon(k+t|k_t|),  \quad |\partial_ts(x, t)|\leq C\varepsilon(k+t|k_t|),\\
&(r-s)(x,t)\geq\frac{\varepsilon}{2C(T)},
\end{split}
\end{equation*}
for $(x,t)\in\R\times[T, \infty).$
Thus we can extend the solutions to the whole upper plane and get that
 $r(x,t), s(x,t)\in C^1(\R\times[0,\infty))$ with $r-s>0$ everywhere. For the
lower half plane, we can also get  $r(x,t), s(x,t)\in C^1(\R\times(-\infty, 0])$ in the same way. Therefore, the existence of the system (\ref{rs}) in the whole plane
 with initial data $r(x,0)=\eta_0,$ $s(x,0)=-\eta_0$ is proved.
The argument in Section \ref{ode} and the relation $w=kr,$  $z=ks$
yield the global existence of  $C^1$ solutions to the system  (\ref{GC}).
Furthermore, the fundamental theorem of surface theory  implies that there exists a $C^3$ surface with the prescribed metric whose Gauss curvature satisfying (H1)-(H3). This completes the proof of Theorem \ref{thm1}.
%\cqfd
\end{proof}

\bigskip

%%%
%%%%%%%%%%%%
\section{Proof of Theorem \ref{thm} in Geodesic Polar Coordinates}\label{geodesicpolar}
%%%%%%%%%%%%%
%%%%%%%%%%

In this section, we shall apply the  Theorem \ref{thm1}  on the existence of isometric immersion in geodesic coordinates $(x,t)$  to prove the main Theorem \ref{thm} in geodesic polar coordinates $(\theta,\rho)$.   The idea of the proof follows  Hong \cite{H} (cf. \cite{HH}) closely with  the modifications that are necessary due to the slower decay rate at  infinity of  the Gauss curvature.  For the sake of completeness we shall provide an outline of the proof.

The metric under the geodesic polar coordinates $(\theta, \rho)$ is of the form
$$\mathfrak{g}=G^2(\theta, \rho)d\theta^2 +d\rho^2$$  satisfying
\begin{eqnarray}
\left\{
\begin{array}{lllll}
G_{\rho\rho}=k^2G,\\
G(\theta, 0)=0, \, G_{\rho}(\theta, 0)=1, \text{ for any } \theta\in[0, 2\pi].
\end{array}
\right.
\end{eqnarray}
Then the system \eqref{wz} in the polar coordinates $(\theta,\rho)$ becomes
 \begin{equation}\label{Riemannpolar}
 \begin{split}
&\bar{w}_\rho+\bar{z}\bar{w}_\theta=-\left(\frac{G_\rho}{G}-\frac{k_\rho}{2k}\right)\bar{w}-
\left(\frac{G_\rho}{G}+\frac{k_\rho}{2k}\right)\bar{z}-GG_\rho \bar{w}^2\bar{z}
-\left(\frac{G_\theta}{G}+\frac{k_\theta}{2k}\right)\bar{w}\bar{z}+\frac{k_\theta}{2k}\bar{w}^2,\\
&\bar{z}_\rho+\bar{w}\bar{z}_\theta=-\left(\frac{G_\rho}{G}-\frac{k_\rho}{2k}\right)\bar{z}-
\left(\frac{G_\rho}{G}+\frac{k_\rho}{2k}\right)\bar{w}-GG_\rho \bar{w}\bar{z}^2
-\left(\frac{G_\theta}{G}+\frac{k_\theta}{2k}\right)\bar{w}\bar{z}+\frac{k_\theta}{2k}\bar{z}^2,
\end{split}
\end{equation}
which is equivalent to the system (\ref{GC}) for smooth
solutions if $\bar{w}>\bar{z}$, where we use $\bar{w}(\theta,\rho), \bar{z}(\theta,\rho)$ to denote $w(x,t), z(x,t)$ in the geodesic polar coordinates $(\theta,\rho)$.

First we have the following properties of $G$  in the polar coordinates similar to Lemma \ref{lemmaB} and the proof is omitted.
%Similar to Lemma \ref{lemmaB}, we can get similar properties of $G$ with $B$. The proof is omitted.
\begin{lemma}\label{lemmaG}
Assume that the assumptions (A1)-(A2) hold, then
\begin{equation*}
\begin{split}
&\frac{G_\rho}{G}=\frac{1}{\rho}+o\left(\frac{1}{\rho}\right),\\
&\partial_\theta^i\ln G, i=1, 2,  ~\rho\partial_\rho\partial_\theta\ln G\text{ are bounded for sufficiently large } \rho,\\
&1\leq\partial_\rho G\leq b_0,~\rho\leq G\leq b_0\rho,~\rho\partial_\rho\ln G>1,
\end{split}
\end{equation*}
where $b_0=\exp\left\{\sup_{\theta}\int_0^\infty\rho k^2d\rho\right\}.$
\end{lemma}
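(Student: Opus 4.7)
The plan is to mirror the proof of Lemma \ref{lemmaB} with the initial conditions $G(\theta,0)=0$, $G_\rho(\theta,0)=1$ in place of $B(x,0)=1$, $B_t(x,0)=0$. The argument splits into three parts: the two-sided pointwise bounds on $G$ and $G_\rho$, the boundedness of the $\theta$-derivatives of $\ln G$, and the asymptotic relation $G_\rho/G = 1/\rho + o(1/\rho)$.

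First I would establish the two-sided bounds. Since $G_{\rho\rho} = k^2 G$ and $G\ge 0$, $G_\rho$ is nondecreasing in $\rho$, so $G_\rho \ge G_\rho(\theta,0)=1$, which integrates to $G(\theta,\rho)\ge \rho$. For the upper bound I would use the integral form $G_\rho(\theta,\rho) = 1 + \int_0^\rho k^2 G\,ds$ together with the monotonicity inequality $G(\theta,s) \le s\, G_\rho(\theta,s)$; Gronwall then gives
\[
G_\rho(\theta,\rho) \le \exp\Bigl(\int_0^\rho k^2(\theta,s)\, s\,ds\Bigr) \le b_0,
\]
and hence $G \le b_0\rho$. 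The identity $\rho G_\rho - G = \int_0^\rho k^2 G s\,ds$, obtained by a single integration by parts from $G_{\rho\rho}=k^2G$, is strictly positive when $k\not\equiv 0$ on $[0,\rho]$, which delivers $\rho\,\partial_\rho\ln G > 1$.

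Next, for the boundedness of $\partial_\theta^i\ln G$ ($i=1,2$) and $\rho\,\partial_\rho\partial_\theta\ln G$, I would reproduce the bootstrap of Lemma \ref{lemmaB}. From $\partial_\rho\partial_\rho \ln G = k^2 - (\partial_\rho\ln G)^2$, differentiating in $\theta$ gives a first-order linear ODE in $\rho$ for $\partial_\theta\partial_\rho \ln G$ whose solution is
\[
\partial_\theta\partial_\rho\ln G = \frac{1}{G^2}\int_0^\rho 2k k_\theta\, G^2\,ds,
\]
where the boundary term at $\rho=0$ vanishes because $\partial_\theta G(\theta,0)=0$. Combining this with $\rho\le G\le b_0\rho$, the hypothesis $|k_\theta|\le C k$ from (A2), and the integrability $\sup_\theta\int^\infty k^2 s\, ds<\infty$ (which follows from (A1) and $k=o(1/\rho)$), one obtains the boundedness of $\partial_\theta\ln G$ and of $G\,\partial_\theta\partial_\rho\ln G$; a second differentiation in $\theta$ then handles $\partial_\theta^2\ln G$, as in the proof of Lemma \ref{lemmaB}.

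Finally, for the asymptotic, the monotonicity of $G_\rho$ together with its uniform upper bound $b_0$ forces $G_\rho(\theta,\infty) := \lim_{\rho\to\infty} G_\rho$ to exist with $1\le G_\rho(\theta,\infty)\le b_0$. Rewriting the identity above gives
\[
\frac{G_\rho}{G} - \frac{1}{\rho} = \frac{\int_0^\rho k^2 G s\,ds}{\rho G},
\]
and using the bounds on $G/\rho$ one controls the right-hand side by $C\rho^{-1}\int_0^\rho k^2 s^2\,ds$. Exactly as in estimate \eqref{ktdecay}, the assumption $k = o(1/\rho)$ combined with the monotonicity of $k\rho^{1+\delta}$ (valid since $\delta<1/2$) forces this quantity to tend to zero, which yields $G_\rho/G = 1/\rho + o(1/\rho)$.

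The main obstacle, as in the geodesic-coordinate case, is this last asymptotic step: the naive bound $\int_0^\rho k^2 s^2\,ds = O(\rho)$ is insufficient, and one must exploit both $k=o(1/\rho)$ and the monotonicity condition in (A1) to extract the sharper decay. All other steps are direct adaptations of the analogous arguments in the proof of Lemma \ref{lemmaB}, which is why the authors omit the detailed calculations.
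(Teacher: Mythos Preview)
Your proposal is correct and matches what the paper intends (the paper omits the proof, stating it is analogous to that of Lemma~\ref{lemmaB}). One small correction is needed in the asymptotic step: the bound you write for $G_\rho/G - 1/\rho$ should be $C\rho^{-2}\int_0^\rho k^2 s^2\,ds$, not $C\rho^{-1}\int_0^\rho k^2 s^2\,ds$ (using $G\le b_0 s$ in the numerator and $\rho G\ge\rho^2$ in the denominator); as written, the fact that your bound tends to zero would only give $o(1)$ rather than $o(1/\rho)$, but with the correct power of $\rho$ the desired conclusion follows from \eqref{ktdecay} exactly as you say. Incidentally, your direct argument via the identity $\rho G_\rho - G = \int_0^\rho k^2 G s\,ds$ is slightly cleaner here than the corresponding step for $B$ in Lemma~\ref{lemmaB}, since $G_\rho(\theta,0)=1$ already furnishes a uniform positive lower bound and no analogue of hypothesis (H3) is required.
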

%The proof of Lemma \ref{lemmaG} is similar to that of Lemma \ref{lemmaB}, which is omit.

As in Section \ref{geodesic}, we also make the following variable transformations:
$$\bar{r}=\frac{\bar{w}}{k},\quad \bar{s}=\frac{\bar{z}}{k},$$
 then  (\ref{Riemannpolar}) can be rewritten as
\begin{equation}\label{rsbar}
\begin{split}
\bar{r}_\rho+k\bar{s}\bar{r}_\theta&=-\left(\frac{G_\rho}{G}+\frac{k_\rho}{2k}\right)(\bar{r}+\bar{s})
      -\left(\frac{G_\theta}{G}+\frac{3k_\theta}{2k}\right)k\bar{r}\bar{s}+
                        \frac{k_\theta}{2k}k\bar{r}^2-GG_\rho k^2\bar{r}^2\bar{s}\\
                        &=:f(\bar{r}, \bar{s}, \theta, \rho),\\
\bar{s}_\rho+k\bar{r}\bar{s}_\theta&=-\left(\frac{G_\rho}{G}+\frac{k_\rho}{2k}\right)(\bar{r}+\bar{s})
       -\left(\frac{G_\theta}{G}+\frac{3k_\theta}{2k}\right)k\bar{r}\bar{s}+
                        \frac{k_\theta}{2k}k\bar{s}^2-GG_\rho k^2\bar{s}^2\bar{r}\\
                        &=:g(\bar{r}, \bar{s}, \theta, \rho).
\end{split}
\end{equation}
The system \eqref{rsbar} has the same form as the system \eqref{rs} for which the Theorem \ref{thm1} holds. However,    Theorem \ref{thm1} can not be directly   applied to the system \eqref{rsbar} due to the following two reasons: (i) The condition (H3) fails to be true,  and (ii) The coefficients in the  system \eqref{rsbar} are singular at the center $\rho=0$.
%\begin{itemize}
%     \item[(1)] (H3) fails to hold.
 %    \item[(2)]  The coefficients in the  system \eqref{rsbar} are singular at the center $\rho=0$.
%\end{itemize}

To prove the main Theorem \ref{thm}, we first consider the Gauss-Codazzi system in  a neighbourhood of the center $\rho=0$, denoted by $\Omega_1$
in the geodesic coordinates, and then solve the Gauss-Codazzi system in $\Omega_2$ that is the complement of $\Omega_1$ in $\mathbb{R}^2$ in  the geodesic polar coordinates.

 As in previous section, we only consider the part $t\geq0.$ We construct the domains $\Omega_1$ and $\Omega_2$ as follows. Set
\begin{equation*}
\begin{split}
t_0(x)=R(1+x^2), \quad \Omega_1=\{(x, t)| 0\leq t\leq t_0(x)\}, \quad \Omega_2=\R_+^2\setminus\Omega_1,
%&\Omega_1=\{(x, t)| 0\leq t\leq t_0(x)\},
\end{split}
\end{equation*}
where $R$ is a constant to be determined.  Then we have
$$\partial\Omega_2=\{(x, t)| t=t_0(x)\}.$$
Note that $\Omega_1$ constructed above is different from that of \cite{HH} or \cite{H}.

Similar to Lemma \ref{lemmabound} and Lemma \ref{lemmalow}, we can prove the existence of global smooth  solution to any generalized Cauchy problem in the region $\{\rho>R\}$ for (\ref{rsbar}). Precisely, suppose that $\Omega$ is an unbounded domain with a smooth boundary curve:
\begin{equation*}
\Omega=\{(\theta, \rho); \theta_1(\rho)<\theta<\theta_2(\rho), \rho>R\}\subset
\{(\theta, \rho): \rho>R, 0<\theta<\pi\},
\end{equation*}
for some smooth functions $\theta_1<\theta_2$ in $[R, +\infty).$ We have the following lemma.
\begin{lemma}\label{lemmaexist}
Assume that  $\partial\Omega$ is space-like with respect to $\rho$,  i.e.,
\begin{equation*}
\partial_\rho\theta_1(\rho)<k(\theta_1(\rho), \rho)\bar{s}(\theta_1(\rho), \rho),
~\partial_\rho\theta_2(\rho)>k(\theta_2(\rho), \rho)\bar{r}(\theta_2(\rho), \rho),
\end{equation*}
and on $\partial\Omega,$
\begin{equation}\label{initialrsbar}
\begin{split}
-\varepsilon\leq\bar{s}<\bar{r}\leq \varepsilon,\quad
\bar{r}-\bar{s}\geq\varepsilon,\quad
|\partial_\theta\bar{r}|, |\partial_\theta\bar{s}|\leq\mu.
\end{split}
\end{equation}
Then there are two constants $R_0$ and $\eps_0$ such that
when $R\geq R_0$ and $0<\varepsilon<\varepsilon_0,$ there exists a global smooth solution $(\bar{r}, \bar{s})$ to the problem (\ref{rsbar})-(\ref{initialrsbar}) in the closure of $\Omega$.
\end{lemma}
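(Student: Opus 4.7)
The plan is to mirror the strategy of Section \ref{geodesic}: establish local existence from the space-like curve $\partial\Omega$, derive a priori bounds on the weighted Riemann invariants $\bar r,\bar s$ and on the derivative quantities $\tilde{\bar r}=(\bar r-\bar s)\bar r_\theta$, $\tilde{\bar s}=(\bar r-\bar s)\bar s_\theta$ by means of suitable control functions and the comparison principle Lemma \ref{lemmasystem}, and finally continue the local solution to all of $\bar\Omega$. The system \eqref{rsbar} has the same structural form as \eqref{rs}, and Lemma \ref{lemmaG} supplies the asymptotics $G_\rho/G=1/\rho+o(1/\rho)$ together with the boundedness of $\partial_\theta^i\ln G$ and $\rho\,\partial_\theta\partial_\rho\ln G$ that Lemma \ref{lemmaB} provided in the geodesic case, so condition (H3) is not required here. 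Local existence with $C^1$ data on the space-like $\partial\Omega$ is standard for first-order hyperbolic systems and will be invoked without further comment.

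For the a priori estimate, fix $P=(\theta^*,\rho^*)\in\Omega$ and form the backward characteristic triangle $\Delta_P$ of \eqref{rsbar}. The space-like hypothesis, together with the fact that the characteristic speeds $k\bar r,k\bar s$ are $O(\varepsilon k)$ and hence integrable by (A1), ensures that $\Delta_P$ meets $\partial\Omega$ with base $\rho$-coordinates bounded below by $R$. Under the working assumption $|\bar r|,|\bar s|\le 10a_0\varepsilon$ and $|\tilde{\bar r}|,|\tilde{\bar s}|\le 200a_0^2\mu\varepsilon$, I would repeat Step 1 of Lemma \ref{lemmabound} and bound $\bar r+\bar s$ by
\[
\phi_1=2\varepsilon\frac{k(\theta,\rho_0)\rho_0^2}{k(\theta,\rho)\rho^2}+\frac{\varepsilon}{k(\theta,\rho)\rho^2}\int_{\rho_0}^{\rho}k^2(\theta,s)s^2\,ds,
\]
where $\rho_0=\rho_0(\theta)$ is the $\rho$-coordinate where the relevant backward characteristic through $(\theta,\rho)$ meets $\partial\Omega$; this produces $|\bar r+\bar s|\le C\varepsilon k\rho$ as in \eqref{rjias}. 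The control function $\phi_2=\varepsilon(2+\int_{\rho_0}^\rho k\,ds)$ then gives $|\bar r-\bar s|\le a_0\varepsilon$, hence $|\bar r|,|\bar s|\le a_0\varepsilon$. The derivative bound $|\tilde{\bar r}|,|\tilde{\bar s}|\le a_0\mu\varepsilon$ follows from Lemma \ref{lemmasystem} applied to the polar analogue of \eqref{derivative} with the control $\phi_3=\mu\varepsilon(2+\int_{\rho_0}^\rho k\,ds)$; the off-diagonal sign requirement reduces, exactly as in Section \ref{geodesic}, to
\[
f_{\bar s}=-\frac{G_\rho}{G}-\frac{k_\rho}{2k}+\mathrm{h.o.t.}\le -\frac{1-\delta}{2\rho}+o(1/\rho)\le 0,
\]
using Lemma \ref{lemmaG} and the monotonicity $\partial_\rho\ln(k\rho^{1+\delta})\ge 0$ from (A1).

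Finally, repeating the argument of Lemma \ref{lemmalow} with $\phi_4=\varepsilon(1-3a_0\mu\int_{\rho_0}^\rho k\,ds)\ge \varepsilon/2$ (valid once $R$ is large enough by (A1)) gives the lower bound $\bar r-\bar s\ge \varepsilon/2$ throughout $\Delta_P$, and hence $|\bar r_\theta|,|\bar s_\theta|\le 2a_0\mu$. With these a priori estimates in hand, the continuation argument used at the end of the proof of Theorem \ref{thm1} extends the local solution to all of $\bar\Omega$. I expect the main obstacle to be the joint choice of the constants $R_0$ and $\varepsilon_0$: in order that each error term $R_i$ in the comparison inequalities have the sign demanded by Lemma \ref{lemmasystem}, one needs $\sup_\theta\int_{R_0}^\infty k\,d\rho$ to be small and $\varepsilon$ to be much smaller than $\mu$, uniformly in the location of $\rho_0(\theta)$ along $\partial\Omega$; both demands are achievable thanks to the integrability in (A1), and the uniformity in $\rho_0$ follows because the integrals $\int_{\rho_0}^{\rho}k\,ds$ are controlled by the same supremum.
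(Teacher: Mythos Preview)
Your proposal is correct and follows essentially the same approach as the paper: the paper does not give a detailed proof of this lemma, stating only that it is ``Similar to Lemma \ref{lemmabound} and Lemma \ref{lemmalow}'', and your adaptation of the control functions $\phi_1,\dots,\phi_4$ to the polar setting, with Lemma \ref{lemmaG} playing the role of Lemma \ref{lemmaB}, is exactly what is intended. The only minor point is that your $\rho_0=\rho_0(\theta)$ depends on the characteristic rather than being a fixed constant, but since all the relevant integrals are uniformly controlled by $\sup_\theta\int_R^\infty k\,d\rho$ via (A1), this causes no difficulty.
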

Lemma \ref{lemmaexist} can be applied for $\Omega_2,$ provided that the conditions of Lemma \ref{lemmaexist} are satisfied. On the other hand, as in Lemma \ref{lemmageneral}, we can find a small smooth solution  in  $\Omega_1$ as long as the initial data are sufficiently small. To match the solutions in $\Omega_1$ and  $\Omega_2$, we use a coordinate transformation $F$ from the geodesic coordinates $(x, t)$ to the geodesic polar coordinates $(\theta, \rho).$  As in  \cite{HH} or \cite{H}, we have the following conclusions,  the proofs of which are omitted:
\begin{itemize}
\item[(1)] $t, \frac{1}{2}|x|\leq \rho(x, t)\leq t+|x|,$ and then $t+|x|\leq3\rho.$
\item[(2)] $\rho_t=\tanh \Phi, ~\theta_t=\frac{\xi}{G\cosh\Phi}, ~\rho_x=-\frac{\xi B}{\cosh\Phi}, ~\theta_x=\frac{B}{G}\tanh\Phi$  with $\Phi=\int_0^t\partial_\rho\ln Gds$ for $x\ne0.$
\item[(3)] The change of functions $(r, s)\mapsto(\bar{r}, \bar{s})$ satisfies
\begin{equation*}
\begin{split}
(\rho_t+kr\rho_x)k\bar{r}=\theta_t+kr\theta_x,\quad
(\rho_t+ks\rho_x)k\bar{s}=\theta_t+ks\theta_x,
\end{split}
\end{equation*}
and it makes sense in a neighbourhood of $\partial \Omega_2$ in $\Omega_1$, provided that $R$ is large enough.
\item[(4)] For any normalized vector $V$, define the differential map
\begin{equation*}
F_*(V)=(\rho_t+\zeta\rho_x)(\partial_\rho+\bar{\zeta}\partial_\theta)
\end{equation*}
with
\begin{equation*}
\bar{\zeta}=\frac{\theta_t+\zeta\theta_x}{\rho_t+\zeta\rho_x}
\text{ if~} \rho_t+\zeta\rho_x\neq0.
\end{equation*}
And denote $\bar{F}_*(\zeta)=\bar{\zeta}$ if it makes sense. Then
it holds that
\begin{equation}\label{krbar}
k\bar{r}=\bar{F}_*(kr), ~~k\bar{s}=\bar{F}_*(ks),
\end{equation}
and
\begin{equation}\label{r-sbar}
\bar{r}-\bar{s}=\frac{B(r-s)}{G(\rho_t+kr\rho_x)(\rho_t+ks\rho_x)}.
\end{equation}
\end{itemize}
\
\
\\

To seek the initial data, we choose two smooth even functions $h_1$ and $h_2$ in $\R$ satisfying
 \begin{equation*}
 h_1(\zeta)\geq1+\sup_{\substack{(x, t)\in\Omega_1\\  |x|\leq\zeta\\ i=0, 1}}
 \{|\dx^i\dt\ln B|, |\dx^i\dt\ln k|, |\dx^i(k\dx\ln B)|,|\dx^2 k|, |\dx^ik|, |\dx^i(BB_tk^2)|\}, %i=0,1\},
 \end{equation*}
 and
 \begin{equation*}
 h_2(\zeta)\geq1+h_1(\zeta)+\sup_{(x, t)\in\Omega_1, |x|\leq\zeta}
 \left\{\frac{|t_0'(x)|}{t_0(x)+R}\right\},
 \end{equation*}
and increasing in $\R_+.$
Then we define $\eta(x)$ for $x>0$ by
\begin{equation*}
\eta(x)=\frac{1}{8\pi}\int_x^\infty\psi(\zeta)
\frac{\exp\{-30(t_0(\zeta+1)+R)h_1(\zeta+1)-\zeta^2\}}
{(t_0(\zeta+1)+R)h_2(\zeta+1)}d\zeta,
\end{equation*}
where $\psi$ is a smooth cutoff function such that $\psi(\zeta)=1$ for $\zeta\geq1$ and $\psi(\zeta)=0$ for $\zeta$ near zero.  And we can  extend $\eta$ as a smooth even function in $\R.$  Finally, the initial data is chosen as
\begin{equation}\label{initialrs}
r(x, 0)=\sigma\eta(x), ~~s(x, 0)=-\sigma\eta(x),
\end{equation}
with small positive constant $\sigma$ to be determined later. Thus, for the Cauchy problem for  (\ref{rs}) with \eqref{initialrs} in $\Omega_1$, we obtain the following lemma.
\begin{lemma}\label{lemmainner}
There exists a positive $\sigma_0\in(0, 1)$ such that the Cauchy problem of (\ref{rs}) with (\ref{initialrs}) in $\Omega_1$
admits a smooth solution $(r, s)$ with $r>s$ for any $\sigma\in(0, \sigma_0).$
Furthermore, on $\partial \Omega_2,$
the solution satisfies (\ref{initialrsbar}).
\end{lemma}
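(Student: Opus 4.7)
The plan is a continuation/bootstrap argument that propagates the carefully engineered small Cauchy data $\sigma\eta(x)$ throughout $\Omega_1$ by iterated application of Hong's amplification estimate, Lemma \ref{lemmageneral}, and then translates the resulting bounds into the polar variables $(\bar r,\bar s)$ on $\partial\Omega_2$ via the identities \eqref{krbar}--\eqref{r-sbar}. Postulating the bootstrap hypothesis $\max\{|r|,|s|,|r_x|,|s_x|\}\le\sigma^{1/2}$ in $\Omega_1$, the characteristic speeds $kr,ks$ are $O(\sigma)$ and the nonlinear source terms in \eqref{rs} are dominated by the linear ones. For any $P(x^*,t^*)\in\Omega_1$ the backward characteristic triangle $\Delta_P^0$ then lies in the strip $|x|\le |x^*|+t_0(x^*)$, and the coefficient bound in Lemma \ref{lemmageneral} on this strip is at most $h_1(|x^*|+t_0(x^*)+1)$. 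Lemma \ref{lemmageneral} therefore gives
\[
\max\{|r|,|s|,|r_x|,|s_x|\}(x^*,t^*)\le C\sigma\bigl(\eta+|\eta'|\bigr)(|x^*|+t_0(x^*))\exp\bigl(5h_1(|x^*|+t_0(x^*)+1)\,t_0(x^*)\bigr).
\]
The factor $\exp\{-30(t_0(\zeta+1)+R)h_1(\zeta+1)\}/[(t_0(\zeta+1)+R)h_2(\zeta+1)]$ built into $\eta$ absorbs the amplification $\exp(5h_1 t_0)$ and bounds the right-hand side by $C\sigma/h_2(|x^*|+t_0(x^*)+1)\le C\sigma$. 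Choosing $\sigma_0$ small enough thus closes the bootstrap and yields a smooth solution on all of $\Omega_1$ with $|r|,|s|,|r_x|,|s_x|\le C\sigma$.

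Next I would establish $r>s$ throughout $\Omega_1$. The first equation of \eqref{r-s} is linear--homogeneous in $r-s$ up to the source $k\tilde s=k(r-s)s_x$; since $s_x=O(\sigma)$ and $k,B,B_t$ are bounded on the compact-in-$t$ region $\Omega_1$, integrating along the $\beta$-characteristic from the base $t=0$, where $r-s=2\sigma\eta>0$, gives $(r-s)(x,t)\ge c(T)\,\sigma\,\eta(x_0)$ with $x_0$ the foot of that characteristic and $c(T)>0$ depending only on the bounds already established; in particular $r-s>0$ in $\Omega_1$.

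Finally I translate to the polar picture on $\partial\Omega_2=\{t=t_0(x)\}$. By item (1) $\rho\sim t_0(x)+|x|$ on this boundary, so Lemma \ref{lemmaG} gives $G\sim\rho$, $\partial_\rho G\in[1,b_0]$, and the smallness of $kr\rho_x,ks\rho_x$ forces the denominators in \eqref{krbar}--\eqref{r-sbar} to be bounded away from $0$ and $\infty$. Then \eqref{krbar} combined with $|kr|,|ks|\le Ck\sigma$ yields $|\bar r|,|\bar s|\le\eps$ after choosing $\sigma/\eps$ small; the chain rule through item (2) converts $r_x,s_x=O(\sigma)$ into $|\partial_\theta\bar r|,|\partial_\theta\bar s|\le \mu$ after choosing $\sigma/\mu$ small; and \eqref{r-sbar} combined with the lower bound on $r-s$ and with $B/G\ge c/\rho$ produces $\bar r-\bar s\ge c'\sigma\eta(x)\ge \eps$, where the last step uses precisely that the weight $1/[(t_0(\zeta+1)+R)h_2(\zeta+1)]$ inside $\eta$ matches the inverse of the transformation Jacobian appearing on $\partial\Omega_2$. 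The main obstacle I expect is exactly this last inequality: securing the $\eps$-lower bound for $\bar r-\bar s$ requires simultaneous compatibility of the exponential suppression needed to tame Lemma \ref{lemmageneral}, the algebraic decay needed to cancel the factor $G\sim\rho$ in \eqref{r-sbar}, and the initial positivity of $r-s$; the whole purpose of the intricate definition of $\eta$ is to package these three requirements into a single weight.
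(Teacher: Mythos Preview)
Your existence argument in $\Omega_1$ via Lemma \ref{lemmageneral} and the lower bound on $r-s$ are essentially correct in spirit and match the paper's approach (modulo the minor issue that, under your bootstrap $|kr|,|ks|\le C\sigma$, the backward characteristic triangle through $(x^*,t^*)$ actually stays in $|x-x^*|\le 1$, not $|x|\le |x^*|+t_0(x^*)$; this is what makes the weight $h_1(\zeta+1)$ in $\eta$ sufficient).

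The genuine gap is in your passage from $(r,s)$ to $(\bar r,\bar s)$ on $\partial\Omega_2$. You claim that $|kr|,|ks|\le Ck\sigma$ forces $|\bar r|,|\bar s|\le\eps$ for $\sigma$ small. This is false: the map $\bar F_*$ in \eqref{krbar} is affine, not linear, in $kr$. From item~(2),
\[
k\bar r=\frac{\theta_t+kr\,\theta_x}{\rho_t+kr\,\rho_x}
=\frac{\xi/(G\cosh\Phi)+kr\cdot(B/G)\tanh\Phi}{\tanh\Phi-kr\cdot \xi B/\cosh\Phi},
\]
so at $r=0$ one has $\bar r=\xi/(kG\sinh\Phi)$, which is completely independent of $\sigma$. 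The paper shows this quantity is $\le\eps$ on $\partial\Omega_2$ by a geometric argument: using $e^{\Phi}\ge\rho(x,t)/\rho(x,0)$, $G\ge\rho$, and crucially the \emph{lower} bound $k\ge C\rho^{-(1+\delta)}$ furnished by the monotonicity hypothesis in (A1), together with $\rho\sim t_0(x)=R(1+x^2)$, which makes the whole expression $O(R^{-(1-\delta)})$. Smallness thus comes from choosing $R$ large, not $\sigma$ small. The same failure occurs in your bound on $\partial_\theta\bar r$: the terms the paper calls $I_1$ and $I_4$ are the $\theta$-derivatives of the $r$-independent piece $\xi/(kG\sinh\Phi)$ and require a delicate estimate of $\partial_\theta\Phi$; they cannot be controlled by $r_x,s_x=O(\sigma)$ via the chain rule. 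This is precisely the part of the proof the paper flags as ``different from \cite{HH} or \cite{H}'', and it is where the slow-decay hypothesis $\partial_\rho\ln(k\rho^{1+\delta})\ge0$ with $\delta<1/2$ actually enters.
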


Once we have shown Lemma \ref{lemmainner}, we can complete the proof of Theorem \ref{thm} as in \cite{HH} or \cite{H}.
In fact, assume  $(r, s)$ is the solution of (\ref{rs}) with (\ref{initialrs}) as in Lemma \ref{lemmainner}. Since the change of functions $(r, s)\mapsto(\bar{r}, \bar{s})$ makes sense in a neighbourhood of $\partial \Omega_2$ in $\Omega_1$, and Lemma \ref{lemmainner} guarantees that the conditions in Lemma \ref{lemmaexist} are satisfied for $\Omega=F(\Omega_2),$  %from Lemma \ref{lemmaexist},
there is a smooth solution $(\bar{r}, \bar{s})$ to the problem (\ref{rsbar})-(\ref{initialrsbar}).
 Taking the variable transformation:
 $$(\theta, \rho)\in F(\Omega_2)\mapsto(x, t)\in\Omega_2 \text{ and } (\bar{r}, \bar{s})\mapsto(r, s),$$
again from Lemma \ref{lemmaexist},   we have a smooth extension of $(r, s)$ in $\Omega_1$ to $\Omega_2,$ with $r>s$ everywhere.  Therefore, we obtain a global smooth solution of (\ref{rs}) with (\ref{initialrs}) with $r>s$ for $t>0,$ %Since we can find a global smooth solution to (\ref{rs}) for $t<0$ through a similar process, we get a global smooth solution to (\ref{rs})  with $r>s.$
which yields a smooth isometric immersion of $g$ into $\R^3$ by applying fundamental  theorem of surface theory. Therefore it  remains to verify Lemma \ref{lemmainner},
the proof of which is different from \cite{HH} or \cite{H}.
\\

{\textit{\textbf{The proof of Lemma \ref{lemmainner}: }}
%\begin{proof}
%First the initial data (\ref{initialrs}) and integration of  the equation  (\ref{r-s})  yield  $r>s$ in the closure of $\Omega_1$ as long as $(r, s)$ exists.
Similarly to \cite{HH}, it is easy to verify $r>s$ on $\partial\Omega_2$ and % (\ref{r-sbar}) lead to
$\bar{r}>\bar{s}$ on $\partial F(\Omega_2).$
We can show that the differential system (\ref{rs}) with (\ref{initialrs}) in $\Omega_1$ has a global
smooth solution $(r, s)$ for each $x$ and each $\sigma\in(0, \sigma_0)$ with $\sigma_0\leq1,$
%In fact, for each $|x_0|\geq2,$ for all $t\in[0, t_0(x_0)],$ we have
satisfying
\begin{equation}\label{rsomega1}
|r|, |s|, |r_x|, |s_x|\leq\frac{C\sigma}{(t_0(x)+R)h_2(x)}
\end{equation}
in the characteristic triangle where the solution exists. Furthermore, we can also prove that $\partial\Omega_2$ is space-like in $\rho$. The proof is omitted.

To complete the proof of Lemma \ref{lemmainner}, it remains to show that $(r, s)$ satisfies (\ref{initialrsbar}).
From (\ref{krbar}), we have
\begin{equation*}
\begin{split}
\bar{r}=\frac{\bar{F}_*(kr)}{k}=\frac{\xi+kBr\sinh\Phi}{kG(\sinh\Phi-kBr\xi)},\quad
\bar{s}=\frac{\bar{F}_*(ks)}{k}=\frac{\xi+kBs\sinh\Phi}{kG(\sinh\Phi-kBs\xi)}.
\end{split}
\end{equation*}
Since
%\begin{equation*}
$0\leq1-\rho_t\leq\frac{1}{\cosh\Phi},$
%\end{equation*}
then, for $|x|>2,$
\begin{equation*}
\begin{split}
\Phi=\int_0^t\frac{G_\rho}{G}ds\geq\int_0^t\frac{1}{\rho}ds
\geq\int_0^t\frac{\rho_s}{\rho}ds=\ln\rho(x, t)-\ln\rho(x, 0),
\end{split}
\end{equation*}
and thus $e^{\Phi}\geq\frac{\rho(x, t)}{\rho(x, 0)}.$
From (A1), we also have
$k\geq\frac{C}{\rho^{1+\delta}},$  and then  on $\partial\Omega_2\cap\{|x|>2\},$
\begin{equation*}
\begin{split}
|\bar{r}|\leq\frac{1}{2kG\sinh\Phi}\leq\frac{C\rho(x, 0)
\rho^{1+\delta}(x, t)}{\rho^{2}(x, t)}
\leq\frac{C|x|}{t_0(x)^{1-\delta}}\leq\frac{C|x|}{R^{1-\delta}|x|^{2-2\delta}}
\leq\frac{C}{R^{1-\delta}}\leq\eps,
\end{split}
\end{equation*}
due to $0<\delta<1/2.$
Meanwhile, on  $\partial\Omega_2\cap\{|x|\leq2\},$ one has
\[
|\bar{r}|\leq\frac{1}{2kG\sinh\Phi}\leq\frac{C\rho^\delta}{R}
\leq\frac{C+CR^\delta}{R}\leq\eps.
\]
Therefore, on $\partial\Omega_2,$ $|\bar{r}|\leq\eps.$ Similarly,
$|\bar{s}|\leq\eps.$

We also need to estimate $\partial_\theta\bar{r}$ and $\partial_\theta\bar{s}.$
Taking the partial derivative of $\bar{r}$
with respect to $\theta,$ we get on $\partial\Omega_2,$
\begin{equation*}
\begin{split}
\partial_\theta\bar{r}&=-\left(\frac{k_\theta}{k}+\frac{G_\theta}{G}\right)
\frac{1}{kG}\frac{\xi+kBr\sinh\Phi}{\sinh\Phi-kBr\xi}\\
&\quad +\partial_\theta r\left[\frac{B\sinh\Phi}{G(\sinh\Phi-kBr\xi)}
+\frac{B\xi(\xi+kBr\sinh\Phi)}{G(\sinh\Phi-kBr\xi)^2}\right]\\
&\quad + \left(\frac{B_x}{B}x_\theta+\frac{B_t}{B}t_\theta+\frac{k_\theta}{k}\right)
\left[\frac{rB\sinh\Phi}{G(\sinh\Phi-kBr\xi)}-\frac{B\xi(\xi+kBr\sinh\Phi)}{G(\sinh\Phi-kBr\xi)^2}\right]\\
&\quad +\partial_\theta\Phi\left[\frac{Br\cosh\Phi}{G(\sinh\Phi-kBr\xi)}
-\frac{\cosh\Phi(\xi+kBr\sinh\Phi)}{kG(\sinh\Phi-kBr\xi)^2}\right]=\sum_{j=1}^4I_j.
\end{split}
\end{equation*}
For $I_1,$ as above one has
$|I_1|\leq \frac{C}{kG\sinh\Phi}\leq\frac{C}{R^{1-\delta}}\leq\mu,$
with $R$ large enough.
For $I_2,$  we have  on $\partial F(\Omega_2)$,
\begin{equation*}
\begin{split}
|\partial_\theta r|&=|r_tt_\theta+r_xx_\theta|\leq|f(\theta, \rho)t_\theta|+k|s||r_x||t_\theta|+|r_x||x_\theta|\\
%&\leq\frac{C\sigma}{(t_0(x)+R)h_2(x)}(h_1(x)|t_\theta|+k|t_\theta|+|x_\theta|)\\
&\leq\frac{C\sigma}{(t_0(x)+R)h_2(x)}
\left[\frac{G}{B}\tanh\Phi+\frac{(h_1(x)+k)G}{\cosh\Phi}\right]\leq C\sigma,
\end{split}
\end{equation*}
since $G\leq b_0\rho,$ $\rho\leq 2t$ on $\partial\Omega_2.$ Thus we obtain
$|I_2|\leq C|\partial_\theta r|\leq C\sigma\leq\mu$
from \eqref{rsomega1} and choosing small $\sigma_0$. In the same way,
we have $|I_3|\leq C|r|\leq\mu$.
For $I_4,$ it holds that
$$|I_4|=\left|\partial_\theta\Phi
\frac{-\xi\cosh\Phi(1+k^2B^2r^2)}{kG(\sinh\Phi-kBr\xi)^2}\right|
\leq \frac{C\cosh\Phi}{kG\sin^2\Phi}|\partial_\theta\Phi|.$$
We then derive  the estimates of $\partial_\theta\Phi$ as follows. For $|x|>2,$
\begin{equation*}
\begin{split}
\partial_\theta\Phi&=\int_0^t(\partial_\rho\ln G)_xds x_\theta+\partial_\rho\ln Gt_\theta\\
%&=\int_0^t(\partial_{\rho\rho}\ln G\rho_x+\partial_{\theta\rho}\ln G\theta_x)dsx_\theta +\partial_\rho\ln Gt_\theta\\
&=\left[\int_0^t(k^2-(\partial_\rho\ln G)^2)\frac{-\xi B}{\cosh\Phi}ds
+\int_0^t\partial_{\theta\rho}\ln G\frac{B\tanh\Phi}{G}ds\right]\frac{G}{B}\tanh\Phi
\\
&\qquad+\partial_\rho\ln G\frac{\xi G}{\cosh\Phi}.
\end{split}
\end{equation*}
Moreover, it holds that
\begin{equation*}
\begin{split}
&\left|\int_0^tk^2\frac{-\xi B}{\cosh\Phi}ds\right|\leq C\int_0^tk^2sds\leq C,\\
&\left|\int_0^t\partial_{\theta\rho}\ln G\frac{B\tanh\Phi}{G}ds\right|
%\leq\int_0^t\frac{B}{G^3}
%\tanh\Phi\int_0^\rho2kk_\theta G^2(\theta,\alpha)d\alpha ds\\
\leq\int_0^t\frac{Ct}{\rho^3}\int_0^\rho k^2\alpha^2d\alpha ds
\leq\int_0^tCk^2sds\leq C,
\end{split}
\end{equation*}
and
\begin{equation*}
%\begin{split}
\left|\int_0^t\frac{G_\rho^2}{G^2}\frac{-\xi B}{\cosh\Phi}ds\right|
\leq\int_0^t\frac{C\rho(x, 0)s}{\rho(x, s)^3}ds
\leq C|x|\int_0^t\frac{1}{(s+|x|)^2}ds\leq C,
%C|x|\left(\frac{1}{|x|}-\frac{1}{t+|x|}\right)\leq C,
%\end{split}
\end{equation*}
where we have used  the fact:
$t+|x|\leq 3\rho.$
Hence, for $|x|>2,$
\begin{equation*}
|\partial_\theta\Phi|\leq \frac{CG}{B}\tanh\Phi+\frac{C}{\cosh\Phi}.
\end{equation*}
Therefore, on  $\partial\Omega_2\cap\{|x|>2\},$
\begin{equation*}
\begin{split}
|I_4|&\leq\frac{C\cosh\Phi}{kG\sinh^2\Phi}
\left(\frac{CG}{B}\tanh\Phi+\frac{C}{\cosh\Phi}\right)\\
&\leq\frac{C}{kB\sinh\Phi}+\frac{C}{kG\sinh^2\Phi}
\leq\frac{Ct^\delta\rho(x, 0)}{\rho(x, t)}+
\frac{C\rho^2(x, 0)}{\rho^{2-\delta}}
\leq\frac{C}{R^{1-\delta}}\leq\mu,
\end{split}
\end{equation*}
for large $R$ and small $\sigma_0$.

When $|x|\leq2,$ on $\partial\Omega_2,$
\[
B_t(x, \infty)\geq\inf_{|x|\leq2}\int_0^tk^2(x, s)ds>0.
\]
Then following the idea in the proof of Lemma \ref{lemmaB}, we have, on  $\partial\Omega_2\cap\{|x|\leq2\},$
$$\left|\frac{B_t}{B}-\frac{1}{t}\right|\leq
\frac{C}{t}\frac{t^{-1}\int_0^tBk^2sds}{B_t(x, \infty)}\leq Ck^2t,$$
and
\[
\frac{G_\rho}{G}-\frac{1}{\rho}
\leq\frac{C}{\rho}\frac{\rho^{-1}\int_0^\rho Gk^2\rho ds}{G_\rho(x, \infty)}
\leq Ck^2\rho.
\]
Recalling the formula of $\partial_\theta\Phi$ in \cite{H},
\begin{equation*}
\begin{split}
\partial_\theta\Phi
%&=(\rho_{tt}t_\theta+\rho_{tx}x_\theta)\cosh^2\Phi\\
%&=GG_\rho\theta_t^2t_\theta\cosh^2\Phi+
%(GG_\rho\theta_x\theta_t-B_tG\theta_t)x_\theta\cosh^2\Phi\\
%&=\xi \frac{G_\rho}{\cosh\Phi}
%+\xi\left(\frac{G_\rho}{G}\tanh\Phi-\frac{B_t}{B}\right)G\sinh\Phi\\
&=\xi\frac{G_\rho}{\cosh\Phi}+\bigg[\frac{1}{\rho}-\frac{1}{t}
+\left(\frac{G_\rho}{G}-\frac{1}{\rho}\right)\\
&~~~~+(\tanh\Phi-1)\frac{G_\rho}{G}
-\left(\frac{B_t}{B}-\frac{1}{t}\right)\bigg]\xi G\sinh\Phi,
\end{split}
\end{equation*}
we have on $\partial\Omega_2\cap\{|x|\leq2\},$
$$|\partial_\theta\Phi|\leq \frac{C}{R}+Ck^2R^2\sinh\Phi,$$
since $\rho\leq t_0(x)+|x|\leq CR$ and $\frac1{t}-\frac1{\rho}\leq C/R^2.$
Therefore, on $\partial\Omega_2\cap\{|x|\leq2\},$
\begin{equation*}
\begin{split}
|I_4|\leq\frac{C\cosh\Phi}{kG\sinh^2\Phi}
\left(\frac{C}{R}+Ck^2R^2\sinh\Phi\right)
\leq\frac{C}{kR^3}+CkR
\leq\mu,
\end{split}
\end{equation*}
where we have used the fact:
$\frac{C}{R^{1+\delta}}\leq k=o\left(\frac{1}{R}\right)$ on $\partial\Omega_2\cap\{|x|\leq2\},$
for $R$   large enough. Finally, we get $|I_4|\leq\mu$ on $\partial\Omega_2.$
Therefore,  $|\partial_\theta\bar{r}|\leq\mu\text{ on }\partial\Omega_2.$
Similarly, we have
$
|\partial_\theta\bar{s}|\leq\mu,
$ on $\partial\Omega_2.$
The proof is complete.
%\end{proof}
%\bigskip

%  {\textit{\textbf{The end of the proof of Theorem \ref{thm}: }}

%  Assume  $(r, s)$ is the solution of (\ref{rs}) with (\ref{initialrs}) as in Lemma \ref{lemmainner}. Then Lemma \ref{lemmaboundary} ensures that the change of functions
%  $(r, s)\mapsto(\bar{r}, \bar{s})$ makes sense in a neighbourhood of $\partial \Omega_2$ in $\Omega_1$, and Lemma \ref{lemmainner} guarantees
%  that the conditions in Lemma \ref{lemmaexist} are satisfied for $\Omega=F(\Omega_2).$  By Lemma \ref{lemmaexist},  there is a smooth solution $(\bar{r}, \bar{s})$ to the problem (\ref{rsbar})-(\ref{initialrsbar}).
%  We need to take the variable transformation:
%  $$(\theta, \rho)\in F(\Omega_2)\mapsto(x, t)\in\Omega_2 \text{ and } (\bar{r}, \bar{s})\mapsto(r, s).$$
%Again from Lemma \ref{lemmaexist},   we have a smooth extension of $(r, s)$ in $\Omega_1$ to $\Omega_2,$
%and in addition $r>s$ everywhere.  Therefore, we obtain a global smooth solution of (\ref{rs}) with (\ref{initialrs})
%  with $r>s$ for $t>0.$ Since we can find a global smooth solution to (\ref{rs}) for $t<0$ through a similar process, we get a global smooth solution to (\ref{rs})  with $r>s.$ Thus we can find smooth solutions to (\ref{GC}), which yields a smooth isometric immersion of $g$ into $\R^3$ by applying fundamental  theorem
%   of surface theory.
\cqfd

\bigskip
%%%%
%%%%%%%%%%
\section*{Acknowledgments}
F. Huang's research was supported in part  by NSFC Grant No. 11371349.
D. Wang's research was supported in part by the NSF Grant DMS-1312800 and NSFC Grant No. 11328102. The authors would like to thank Professor Qin Han for his valuable discussions and suggestions.
%%%%%%%%%%
%
\bigskip

%%%%%%%%%%%%%%%%%%%%%%%%%%%%%%%%%%%%%%%%%%%%%%%

\end{document}